\documentclass[12pt,reqno]{amsart}
\usepackage{amsmath, amsthm, amssymb}

\topmargin 1cm
\advance \topmargin by -\headheight
\advance \topmargin by -\headsep
     
\setlength{\paperheight}{270mm}%
\setlength{\paperwidth}{192mm}%
\textheight 22.5cm
\oddsidemargin 1cm
\evensidemargin \oddsidemargin
\marginparwidth 1.25cm
\textwidth 14cm
\setlength{\parskip}{0.05cm}

\newtheorem{theorem}{Theorem}[section]
\newtheorem{lemma}[theorem]{Lemma}
\newtheorem{corollary}[theorem]{Corollary}
\newtheorem{conjecture}[theorem]{Conjecture}

\theoremstyle{definition}

\theoremstyle{remark}

\numberwithin{equation}{section}

\newcommand{\mmod}[1]{\,\,(\text{mod}\,\,#1)}

\def\bfc{{\mathbf c}}
\def\bfd{{\mathbf d}}

\def\bfh{{\mathbf h}}

\def\bfn{{\mathbf n}}

\def\bfz{{\mathbf z}}

\def\calA{{\mathcal A}} \def\calAbar{{\overline \calA}} 

\def\calB{{\mathcal B}} 
\def\calC{{\mathcal C}} \def\calCbar{{\overline \calC}}

\def\calD{{\mathcal D}}
\def\calE{{\mathcal E}}

\def\calL{{\mathcal L}}
\def\calK{{\mathcal K}} 
\def\calM{{\mathcal M}}
\def\calN{{\mathcal N}}

\def\calP{{\mathcal P}}
\def\calQ{{\mathcal Q}}
\def\calR{{\mathcal R}} \def\calRbar{{\overline \calR}}
\def\calS{{\mathcal S}}
\def\calT{{\mathcal T}}

\def\calX{{\mathcal X}}
\def\calZ{{\mathcal Z}}

\def\dbN{{\mathbb N}}

\def\dbZ{{\mathbb Z}}

\def\gra{{\mathfrak a}}\def\grA{{\mathfrak A}}
\def\grb{{\mathfrak b}}\def\grB{{\mathfrak B}}
\def\grc{{\mathfrak c}}\def\grC{{\mathfrak C}}
\def\grD{{\mathfrak D}}
\def\grl{{\mathfrak l}}\def\grL{{\mathfrak L}}
\def\grN{{\mathfrak N}}
\def\grn{{\mathfrak n}}
\def\grB{{\mathfrak B}}\def\grC{{\mathfrak C}}
\def\grL{{\mathfrak L}}

\def\alp{{\alpha}} 
 
\def\del{{\delta}} \def\Del{{\Delta}}  
\def\tet{{\theta}}

\def\Ups{{\Upsilon}} \def\Upshat{{\widehat \Ups}}

\def\eps{\varepsilon}

\def\le{\leqslant} \def\ge{\geqslant}

\begin{document}
\title[Exceptional sets]{Relations between exceptional sets for additive 
problems}
\author[K. Kawada]{Koichi Kawada}
\address{KK: Department of Mathematics, Faculty of Education, Iwate University, 
Morioka, 020-8550 Japan}
\email{kawada@iwate-u.ac.jp}
\author[T. D. Wooley]{Trevor D. Wooley$^*$}
\address{TDW: School of Mathematics, 
University of Bristol, 
University Walk, Clifton, 
Bristol BS8 1TW, United Kingdom}
\email{matdw@bristol.ac.uk}
\thanks{$^*$The second author is supported by a Royal Society Wolfson Research
 Merit Award.}
\subjclass[2000]{11P05, 11P55, 11B75}
\keywords{Exceptional sets, Waring's problem, Hardy-Littlewood method}
\date{}
\begin{abstract}We describe a method for bounding the set of exceptional 
integers not represented by a given additive form in terms of the exceptional 
set corresponding to a subform. Illustrating our ideas with examples stemming 
from Waring's problem for cubes, we show, in particular, that the number of 
positive integers not exceeding $N$, that fail to have a representation as the 
sum of six cubes of natural numbers, is $O(N^{3/7})$.\end{abstract}
\maketitle

\section{Introduction} Bounds on exceptional sets in additive problems can 
oftentimes be improved by replacing a conventional application of Bessel's 
inequality with an argument based on the introduction of an exponential sum over
 the exceptional set, and a subsequent analysis of auxiliary mean values 
involving the latter generating function. Such a strategy underlies the earlier 
work concerning slim exceptional sets in Waring's problem due to one or both of 
the present authors (see \cite{slim1}, \cite{slim2}, \cite{slim3}, \cite{slim4},
 \cite{slim5}). Exponential sums over sets defining the additive problem at hand
 are intrinsic to the application of the Hardy-Littlewood (circle) method that 
underpins such approaches. One therefore expects each application of such a 
method to be highly sensitive to the specific identity of the sets in question. 
Our goal in this paper is to present an approach which, for many problems, is 
relatively robust to adjustments in the identity of the underlying sets. We 
illustrate our conclusions with some consequences for Waring's problem, paying 
attention in particular to sums of cubes.\par

In order to present our conclusions in the most general setting, we must 
introduce some notation. When $\calC \subseteq \dbN$, we write $\calCbar$ for 
the complement $\dbN \setminus \calC$ of $\calC$ within $\dbN$. When $a$ and 
$b$ are non-negative integers, it is convenient to denote by $(\calC)_a^b$ the 
set $\calC \cap (a,b]$, and by $|\calC|_a^b$ the cardinality of $\calC \cap 
(a,b]$. Next, when $\calC,\calD \subseteq \dbN$, we define
$$\calC \pm \calD=\{ c\pm d:\text{$c\in \calC$ and $d\in \calD$}\}.$$
As usual, we use $h\calD$ to denote the $h$-fold sum $\calD+\dots +\calD$. Also,
 we define $\Ups (\calC,\calD;N)$ to be the number of solutions of the equation
\begin{equation}\label{1.1}
c_1-d_1=c_2-d_2,
\end{equation}
with $c_1,c_2\in (\calC)_{2N}^{3N}$ and $d_1,d_2\in (\calD)_0^N$. The starting 
point for our analysis of exceptional sets is the inclusion
\begin{equation}\label{1.2}
\left({\overline{\calA+\calB}}-\calB\right)\cap \dbN \subseteq \calAbar.
\end{equation}
In \S2 we both justify this trivial relation, and also apply it to establish a 
relation between the cardinalities of complements of sets that encapsulates the 
key ideas of this paper. The following theorem is a special case of Theorem 
\ref{theorem2.1} below, in which we obtain a conclusion with the sets in 
question restricted to collections of residue classes.

\begin{theorem}\label{theorem1.1}
Suppose that $\calA, \calB\subseteq \dbN$. Then for each natural number $N$, 
one has
$$\left( |\calB|_0^N \left|{\overline{\calA+\calB}}\right|_{2N}^{3N}\right)^2\le 
\left|\calAbar\right|_N^{3N}\Ups ({\overline{\calA+\calB}},\calB;N).$$
\end{theorem}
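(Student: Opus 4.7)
The plan is to deploy the inclusion (1.2) together with a single application of Cauchy--Schwarz to a suitable representation function. First I would verify (1.2): if $n = c - d$ with $c \in \overline{\calA+\calB}$, $d \in \calB$, and $n \in \dbN$, then the assumption $n \in \calA$ would force $c = n + d \in \calA + \calB$, contradicting $c \in \overline{\calA+\calB}$; hence $n \in \calAbar$.

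For each integer $n$, I would introduce the representation function
$$r(n) = \#\{(c,d) \in (\overline{\calA+\calB})_{2N}^{3N} \times (\calB)_0^N : c - d = n\}.$$
Summing $r(n)$ over all $n$ simply recovers the cardinality of the Cartesian product, giving
$$\sum_n r(n) = |\calB|_0^N \cdot \left|\overline{\calA+\calB}\right|_{2N}^{3N}.$$
Moreover, whenever $r(n)\ne 0$ one has $n = c-d$ with $c\in(2N,3N]$ and $d\in(0,N]$, so $n$ lies in $(N,3N]$; invoking (1.2) also forces $n\in \calAbar$. Hence the support of $r$ is contained in $(\calAbar)_N^{3N}$.

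Cauchy--Schwarz then yields
$$\Bigl(|\calB|_0^N \cdot |\overline{\calA+\calB}|_{2N}^{3N}\Bigr)^2 = \Bigl(\sum_{n \in (\calAbar)_N^{3N}} r(n)\Bigr)^2 \le \left|\calAbar\right|_N^{3N} \sum_n r(n)^2,$$
and it remains only to identify $\sum_n r(n)^2$ with $\Ups(\overline{\calA+\calB},\calB;N)$. But this second moment counts exactly the quadruples $(c_1,c_2,d_1,d_2)$ with $c_i\in(\overline{\calA+\calB})_{2N}^{3N}$ and $d_i\in(\calB)_0^N$ satisfying $c_1-d_1=c_2-d_2$, matching the defining equation (1.1).

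I do not foresee a genuine obstacle: essentially all the content is carried by the inclusion (1.2), and the role of the specific dyadic ranges $(2N,3N]$ and $(0,N]$ is precisely to arrange that the differences $c-d$ automatically fall in the single range $(N,3N]$ corresponding to the factor $|\calAbar|_N^{3N}$ on the right. The only mild care needed is in tracking that the support of $r$ fits into this range, which the choices $(2N,3N]$ and $(0,N]$ ensure cleanly.
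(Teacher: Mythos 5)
Your argument is correct and is essentially the paper's own proof specialised to the case without congruence-class machinery: the paper proves the more general Theorem 2.1 by introducing representation functions $\rho_{\grb\grc}(m)$ for $m=n-b$, applying Cauchy's inequality, using the analogue (2.1) of the inclusion (1.2) to localise the support, and identifying the second moment with $\Ups$; Theorem 1.1 is then read off as the case $q=1$. Your $r(n)$, the Cauchy--Schwarz step, the support observation, and the identification of $\sum_n r(n)^2$ with $\Ups$ are exactly the $q=1$ instances of these steps, so the two proofs agree in substance.
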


The conclusion of Theorem \ref{theorem1.1} is not particularly transparent, so 
it seems appropriate to outline its significance and implications. Note first 
that $\left|\calAbar\right|_N^{3N}$ counts the number of natural numbers in the 
interval $(N,3N]$ that do not lie in $\calA$, which is to say, the {\it 
exceptional set} corresponding to $(\calA)_N^{3N}$. Likewise, we see that 
$\left|{\overline{\calA+\calB}}\right|_{2N}^{3N}$ counts the number of natural 
numbers in the interval $(2N,3N]$ that do not lie in $\calA+\calB$, and hence 
the exceptional set corresponding to $(\calA+\calB)_{2N}^{3N}$. Observe next that 
in many situations of interest, it is possible to show that the number of 
solutions $\bfc, \bfd$ of the equation (\ref{1.1}), counted by $\Ups(\calC,
\calD;N)$, is essentially dominated by the diagonal contribution with $c_1=c_2$ 
and $d_1=d_2$. Thus, under suitable circumstances, one finds that
$$\Ups({\overline{\calA+\calB}},\calB;N)\ll \left|{\overline{\calA +\calB}}
\right|_{2N}^{3N}|\calB|_0^N,$$
and then Theorem \ref{theorem1.1} delivers the bound
$$\left|{\overline{\calA+\calB}}\right|_{2N}^{3N}\ll \left|\calAbar\right|_N^{3N}/
|\calB|_0^N.$$
In this way, we are able to show that the exceptional set corresponding to 
$\calA+\calB$, in the interval $(2N,3N]$, is smaller than that corresponding to 
$\calA$, in $(N,3N]$, by a factor $O(1/|\calB|_0^N)$. With few exceptions, the 
scale of this improvement is well beyond the competence of more classical 
applications of the circle method.\par

The most immediate consequences of Theorem \ref{theorem1.1} concern additive 
problems involving squares or cubes. We begin with a cursory examination of the 
former problems in \S2. It is convenient, when $k$ is a natural number, to 
describe a subset $\calQ$ of $\dbN$ as being a {\it high-density subset of the 
$k$th powers} when (i) one has $\calQ\subseteq \{ n^k:n\in \dbN\}$, and (ii) for
 each positive number $\eps$, whenever $N$ is a natural number sufficiently 
large in terms of $\eps$, then $|\calQ|_0^N>N^{1/k-\eps}$. Also, when $\tet>0$, we 
shall refer to a set $\calR\subseteq \dbN$ as having {\it complementary density 
growth exponent smaller than $\tet$} when there exists a positive number $\del$ 
with the property that, for all sufficiently large natural numbers $N$, one has 
$\left|\calRbar\right|_0^N<N^{\tet-\del}$.

\begin{theorem}\label{theorem1.2}
Let $\calS$ be a high-density subset of the squares, and suppose that $\calA 
\subseteq \dbN$ has complementary density growth exponent smaller than $1$. 
Then, whenever $\eps>0$ and $N$ is a natural number sufficiently large in terms 
of $\eps$, one has
$$\left|{\overline{\calA+\calS}}\right|_{2N}^{3N}\ll N^{\eps -1/2}\left|\calAbar 
\right|_N^{3N}.$$
\end{theorem}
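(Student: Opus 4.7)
My plan is to invoke Theorem~\ref{theorem1.1} with $\calB=\calS$, and then show that the auxiliary quantity $\Ups({\overline{\calA+\calS}},\calS;N)$ is essentially diagonal. Writing $\calC={\overline{\calA+\calS}}$ for brevity, the central task reduces to bounding the off-diagonal contribution to the equation
$$c_1-d_1=c_2-d_2,\qquad c_i\in(\calC)_{2N}^{3N},\ d_i\in(\calS)_0^N.$$

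For the off-diagonal part (where $c_1\ne c_2$, and hence $d_1\ne d_2$), I would exploit the fact that each $d_i=s_i^2$ with $s_i\in\dbN$ satisfying $s_i\le N^{1/2}$, which yields the factorisation $c_1-c_2=d_1-d_2=(s_1-s_2)(s_1+s_2)$. For each non-zero integer $h$ with $|h|<N$, the number of representations $h=s_1^2-s_2^2$ is at most the number of factorisations of $|h|$, and is therefore $O(d(|h|))\ll N^{\eps}$ by the standard divisor bound. Summing over the pairs $(c_1,c_2)$ with $c_1\ne c_2$ gives an off-diagonal contribution of $O(N^{\eps}(|\calC|_{2N}^{3N})^2)$, whilst the diagonal case $c_1=c_2$ contributes exactly $|\calC|_{2N}^{3N}|\calS|_0^N$. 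Thus
$$\Ups(\calC,\calS;N)\ll |\calC|_{2N}^{3N}|\calS|_0^N+N^{\eps}(|\calC|_{2N}^{3N})^2.$$

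Since the conclusion becomes stronger as $\eps$ decreases, I may assume $\eps<\del/4$. Substituting the estimate above into Theorem~\ref{theorem1.1}, and using the high-density hypothesis $|\calS|_0^N\gg N^{1/2-\eps}$, I would conclude via a dichotomy. If the diagonal term dominates the right-hand side, one obtains directly
$$|\calC|_{2N}^{3N}\ll |\calAbar|_N^{3N}/|\calS|_0^N\ll N^{\eps-1/2}|\calAbar|_N^{3N},$$
as required. If instead the off-diagonal term dominates, then cancelling a factor of $(|\calC|_{2N}^{3N})^2$ yields $(|\calS|_0^N)^2\ll N^{\eps}|\calAbar|_N^{3N}$, so that $|\calAbar|_N^{3N}\gg N^{1-3\eps}>N^{1-3\del/4}$, contradicting the assumed bound $|\calAbar|_0^N<N^{1-\del}$ for $N$ sufficiently large. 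The essential input is thus the classical divisor-function bound on differences of two squares; once this is available, no substantive obstacle remains and the result follows from the clean dichotomy.
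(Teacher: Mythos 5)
Your argument is correct and is essentially the paper's own proof (specialised to the $q=1$ case of Theorem~\ref{theorem2.2}): both apply Theorem~\ref{theorem1.1}, bound $\Ups({\overline{\calA+\calS}},\calS;N)$ by the diagonal contribution plus a divisor-function estimate on $n_1-n_2=s_1^2-s_2^2$, and then exploit $|\calS|_0^N\gg N^{1/2-\eps}$ together with the complementary-density hypothesis. The only cosmetic difference is that you frame the final step as a dichotomy (diagonal versus off-diagonal dominance, with the latter leading to a contradiction), whereas the paper divides through by $(|\calS|_0^N)^2$ and absorbs the resulting $N^{\eps-\del}|{\overline{\calA+\calS}}|_{2N}^{3N}$ term; these are equivalent.
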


In \S2 we provide a slightly more general conclusion that captures, inter alia, 
the qualitative features of recent work on sums of four squares of primes (see 
\cite{slim1}, \cite{HK2006}). Following a consideration of some auxiliary mean 
values in \S3, we advance in \S4 to a discussion of additive problems involving 
cubes.

\begin{theorem}\label{theorem1.3}
Let $\calC$ be a high-density subset of the cubes, and suppose that $\calA 
\subseteq \dbN$ has complementary density growth exponent smaller than $\tet$, 
for some positive number $\tet$. Then, whenever $\eps>0$ and $N$ is a natural 
number sufficiently large in terms of $\eps$, one has the following estimates:
\vskip.05cm
\noindent (a)(exceptional set estimates for $\calA+\calC$)
\begin{align*}
\left|{\overline{\calA+\calC}}\right|_{2N}^{3N}&\ll N^{\eps -1/6}\left|\calAbar
\right|_N^{3N}+N^{\eps -2}\left( \left|\calAbar\right|_N^{3N}\right)^3;\\
\left|{\overline{\calA+\calC}}\right|_{2N}^{3N}&\ll N^{\eps -1/3}\left|\calAbar
\right|_N^{3N}+N^{\eps -1}\left( \left|\calAbar\right|_N^{3N}\right)^2;
\end{align*}
\vskip.05cm
\noindent (b)(exceptional set estimates for $\calA+2\calC$)
\begin{align*}
\left|{\overline{\calA+2\calC}}\right|_{2N}^{3N}&\ll N^{\eps -1/2}\left|\calAbar
\right|_N^{3N}+N^{\eps-4/3}\left( \left|\calAbar\right|_N^{3N}\right)^2,
\text{ provided that $\tet\le 1$};\\
\left|{\overline{\calA+2\calC}}\right|_{2N}^{3N}&\ll N^{\eps -2/3}\left|\calAbar
\right|_N^{3N},\text{ provided that $\tet\le {\textstyle{\frac{13}{18}}}$};
\end{align*}
\vskip.05cm
\noindent (c)(exceptional set estimates for $\calA+3\calC$)
\begin{align*}
\left|{\overline{\calA+3\calC}}\right|_{4N}^{6N}&\ll N^{\eps -5/3}\left( 
\left|\calAbar\right|_N^{6N}\right)^2,\text{ provided that $\tet \le 1$};\\
\left|{\overline{\calA+3\calC}}\right|_{4N}^{6N}&\ll N^{\eps -5/6}\left|\calAbar
\right|_N^{6N},\text{ provided that $\tet \le {\textstyle\frac{8}{9}}$}.
\end{align*}
\end{theorem}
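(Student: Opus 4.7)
The plan is to apply Theorem~\ref{theorem1.1} (or its residue-class refinement, the forthcoming Theorem~2.1) in each of parts (a), (b), and (c), with $\calB$ chosen respectively to be $\calC$, $2\calC$, and $3\calC$; in part (c) one works on the doubled scale $(4N,6N]$ in order to accommodate the three-cube sum. Write $Z$ for the cardinality of the exceptional set $\overline{\calA+\calB}$ in the appropriate output interval, and $A$ for $|\calAbar|$ in the corresponding input interval, and introduce the exponential sums $K(\alpha)=\sum_{n\in\overline{\calA+\calB}}e(\alpha n)$ and $g(\alpha)=\sum_{m\in(\calB)_0^N}e(\alpha m)$. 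Orthogonality identifies $\Ups(\overline{\calA+\calB},\calB;N)$ with $\int_0^1|K|^2|g|^2\,d\alpha$, so that the conclusion of Theorem~\ref{theorem1.1} reads $(|\calB|_0^N Z)^2\le A\int_0^1|K|^2|g|^2\,d\alpha$.

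The diagonal of the mean value ($c_1=c_2$, $d_1=d_2$) contributes $Z\cdot|\calB|_0^N$, so invoking the lower bound $|\calB|_0^N\gg N^{h/3-\eps}$ (for $h=1,2,3$)---which follows from the high-density hypothesis on $\calC$ via Cauchy--Schwarz applied to the representation function for sums of $h$ cubes---accounts for the linear-in-$A$ contributions in each stated bound. For the off-diagonal, apply Cauchy--Schwarz or H\"older together with the cubic mean value estimates of \S3: Hooley's fourth-moment bound $\int|g_\calC|^4\ll N^{2/3+\eps}$, Hua's eighth-moment bound $\int|g_\calC|^8\ll N^{5/3+\eps}$, and their higher refinements, combined with the trivial bounds $\int|K|^2=Z$ and $\int|K|^4\le Z^3$. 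Substituting these back into the inequality of Theorem~\ref{theorem1.1} accounts for the quadratic $N^{\cdots}A^2$ and cubic $N^{\cdots}A^3$ contributions appearing in the paired estimates of (a)--(c). The single-term bounds valid under $\tet\le 1$, $\tet\le 13/18$, and $\tet\le 8/9$ arise by using $A\le N^{\tet-\del}$ to check that the linear term dominates at the relevant threshold, while the sharpest refinements (notably the $N^{\eps-1/6}A$ term in (a) and the $N^{\eps-5/3}A^2$ bound in (c)) are most naturally obtained by iterating Theorem~\ref{theorem1.1}, i.e., feeding the bound just derived for $\overline{\calA+h\calC}$ as input to a second application with $\calB=\calC$ at the appropriate scale.

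The main technical obstacle lies in the mean value analysis of \S3, namely in producing sharp bounds on $\int_0^1|K(\alpha)|^2|g_\calC(\alpha)|^{2s}\,d\alpha$ uniformly over the sparse support of $K$. This demands a careful minor-arc treatment combining pointwise Weyl-type bounds for $g_\calC$ with the available high-moment estimates, together with a routine major-arc analysis to isolate the dominant contribution. Once those moment inequalities are in hand, each clause of Theorem~\ref{theorem1.3} follows from Theorem~\ref{theorem1.1} by a routine balancing of diagonal versus off-diagonal against the size constraint on $\calAbar$ imposed by $\tet$.
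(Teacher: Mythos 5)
Your proposal has the correct overall architecture---Theorem~\ref{theorem1.1} (or its refinement Theorem~\ref{theorem2.1}) drives everything, the diagonal contribution paired with the lower bound $|\calB|_0^N\gg N^{h/3-\eps}$ produces the linear-in-$|\calAbar|$ terms, the off-diagonal produces the higher powers, and the constraint on $\tet$ serves to absorb a self-referential remainder. But there is a substantive gap in how you propose to bound the off-diagonal, and a secondary issue with part (c).

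The gap is in the mean value analysis. You propose to control $\int_0^1|f|^{2s}|K|^2\,d\alp$ by splitting, via Cauchy--Schwarz or H\"older, into pure moments of $f$ (Hua's fourth and eighth moments) and trivial moments of $K$ (Parseval and $\int|K|^4\le Z^3$). This is genuinely lossier than what the theorem requires. For $s=1$, H\"older in the way you suggest gives $\int|f|^2|K|^2\ll(\int|f|^4)^{1/2}(\int|K|^4)^{1/2}\ll P^{1+\eps}Z^{3/2}$, whereas the paper's Lemma~\ref{lemma3.1} delivers the two estimates $P^\eps(P^{3/2}Z+Z^{5/3})$ and $PZ+P^{1/2+\eps}Z^{3/2}$. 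Feeding $P^{1+\eps}Z^{3/2}$ into Theorem~\ref{theorem1.1} yields only $Z\ll N^{\eps-2/3}A^2$, visibly weaker than the stated $N^{\eps-1/3}A+N^{\eps-1}A^2$, and the $N^{\eps-1/6}A$ term of the first estimate in (a) is simply unreachable this way. The paper's Lemmas~\ref{lemma3.1}--\ref{lemma3.3} are \emph{mixed} mean value estimates established by a circle-method decomposition: the major arc contribution is pruned using Br\"udern's weighted estimate $\int_0^1\Ups(\alp)|K(\alp)|^2\,d\alp\ll P^{\eps-3}(PZ+Z^2)$ applied after H\"older against $\Ups(\alp)^{2/3}$, the minor arc contribution is controlled by Weyl's inequality $\sup_{\grn}|f|\ll P^{3/4+\eps}$, and the alternative bound in Lemma~\ref{lemma3.1} (and its generalisation, Lemma~\ref{lemma5.1}) comes from Davenport's differencing argument applied to the mixed count directly. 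None of this is recoverable by separating variables via H\"older. You do acknowledge in your final paragraph that the ``main technical obstacle'' is precisely this mixed mean value, but the route you propose for resolving it does not close the gap.

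On part (c): it is not merely ``most natural'' to iterate, it is \emph{necessary}. A direct application of Theorem~\ref{theorem2.1} with $\calB=3\calC$ requires a lower bound of the shape $|3\calC|_0^N\gg N^{1-\eps}$, which amounts to the assertion that almost all integers are sums of three cubes, and this is not known. The paper instead establishes that $\calA+2\calC$ (respectively $\calA+\calC$) has complementary density growth exponent below $2/3$ (respectively $13/18$) using part (b) (respectively (a)), and then applies the second estimate of (a) (respectively (b)) to the shifted set, and this is why the interval in (c) is $(4N,6N]$. Finally, a small correction: the $N^{\eps-1/6}A$ term in part (a) does not require iteration; it follows in one step from Theorem~\ref{theorem2.1} together with the first estimate of Lemma~\ref{lemma3.1}.
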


The bounds supplied by Theorem \ref{theorem1.3}(a) have direct consequences for 
the exceptional set in Waring's problem for sums of cubes. When $s$ is a natural
 number and $N$ is positive, write $E_s(N)$ for the number of positive integers 
not exceeding $N$ that fail to be represented as the sum of $s$ positive 
integral cubes. Thus, if we define $\calC=\{ n^3:n\in \dbN\}$, then we have 
$E_s(N)=\left|{\overline{s\calC}}\right|_0^N$. In \S5 we establish the following
 estimates for $E_s(N)$. 

\begin{theorem}\label{theorem1.4}
Let $\tau$ be any positive number with $\tau^{-1}>2982+56\sqrt{2833}$. Then 
one has
$$E_4(N)\ll N^{37/42-\tau},\quad E_5(N)\ll N^{5/7-\tau},\quad E_6(N)\ll N^{3/7-2\tau}.$$
\end{theorem}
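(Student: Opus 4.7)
The plan is to establish the three bounds sequentially by iterating Theorem~\ref{theorem1.3}(a). Writing $\calC=\{n^3:n\in\dbN\}$, I would first secure an input bound of the form $E_3(N)\ll N^{1-\delta_0}$ with $\delta_0$ a suitably large positive constant, drawn from earlier slim exceptional set work on sums of three cubes, so that $\calA=3\calC$ has complementary density growth exponent at most $\tet_3=1-\delta_0$.

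The first bootstrapping step applies Theorem~\ref{theorem1.3}(a), first displayed estimate, with $\calA=3\calC$. Since $\tet_3>11/12$ the nonlinear term $N^{\eps-2}(|\calAbar|_N^{3N})^3$ dominates, and dyadic summation over intervals $(2M,3M]$ tiling $(0,N]$ yields $E_4(N)\ll N^{3\tet_3-2+\eps}=N^{1-3\delta_0+\eps}$. Writing $\delta_0=5/126+\nu$ with $\nu>0$, this becomes $E_4(N)\ll N^{37/42-3\nu}$, so that $\tau=3\nu$ will be the propagating slack constant.

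The second step applies the same theorem part, again the first displayed estimate, with $\calA=4\calC$ and $\tet_4=37/42-\tau$. Now $\tet_4\in(5/6,11/12)$, so the linear term $N^{\eps-1/6}|\calAbar|_N^{3N}$ dominates, delivering $E_5(N)\ll N^{\tet_4-1/6+\eps}=N^{5/7-\tau}$ after dyadic summation. The final step switches to the second displayed estimate, now with $\calA=5\calC$ and $\tet_5=5/7-\tau<5/6$; here the nonlinear term $N^{\eps-1}(|\calAbar|_N^{3N})^2$ controls matters, and among the candidate exponents $\tet_5-1/3=8/21-\tau$ and $2\tet_5-1=3/7-2\tau$ the latter dominates, producing $E_6(N)\ll N^{3/7-2\tau}$.

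The hard part will be pinning down the admissible range for $\tau$. The required input $E_3(N)\ll N^{121/126-\tau/3}$, equivalent to $\delta_0\ge 5/126+\tau/3$, must come from prior work on sums of three cubes, and compatibility with the auxiliary mean value estimates for cubes developed in \S3 that power Theorem~\ref{theorem1.3}(a) introduces further quantitative losses at each bootstrapping stage. Tracking these losses and balancing them against the input eventually produces a quadratic inequality in $\tau$, whose solution yields the explicit threshold $\tau^{-1}>2982+56\sqrt{2833}$; the surd $\sqrt{2833}$ is the signature of this quadratic optimization and represents the most delicate element of the proof.
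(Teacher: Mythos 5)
Your treatment of $E_5$ and $E_6$ matches the paper: apply the first estimate of Theorem~\ref{theorem1.3}(a) with $\calA=4\calC$ to get $E_5(N)\ll N^{5/7-\tau+\eps}+N^{9/14-3\tau+\eps}$ (linear term dominates, since $37/42<11/12$), then the second estimate with $\calA=5\calC$ to get $E_6(N)\ll N^{8/21-\tau+\eps}+N^{3/7-2\tau+\eps}$ (quadratic term dominates, since $5/7>2/3$), with dyadic summation in between. That part is fine, aside from the slightly off threshold $5/6$ you quote (the crossover in the first estimate is at $\tet=11/12$, in the second at $\tet=2/3$; the intermediate value $5/6$ plays no role).

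The genuine gap is in your opening step and in your account of where the surd $\sqrt{2833}$ comes from. The paper does not derive $E_4$ from $E_3$ at all: it says explicitly that $E_4(N)\ll N^{37/42-\tau}$ for $\tau^{-1}>2982+56\sqrt{2833}$ is ``simply a restatement of Theorem~1.3 of Wooley~\cite{Woo2000}.'' The threshold, and the surd, are inherited wholesale from that reference; they are not produced by any optimization inside the present paper. Your closing paragraph fabricates a mechanism that is not there: you propose ``further quantitative losses at each bootstrapping stage'' which, when ``balanced against the input,'' produce ``a quadratic inequality in $\tau$'' whose solution gives the surd. But Theorem~\ref{theorem1.3}(a) carries only $N^\eps$ losses, which are negligible and are absorbed throughout; there is nothing to balance, and $\tau$ propagates through steps~2 and~3 unchanged. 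Moreover, your proposed step~1 --- deducing $E_4$ from a hypothetical bound $E_3(N)\ll N^{121/126-\tau/3}$ via the first estimate of Theorem~\ref{theorem1.3}(a) --- is an additional claim requiring an $E_3$ input that the paper neither states nor uses; even if such an $E_3$ bound were available, there is no reason the $\tau$-range emerging from this route would coincide with $2982+56\sqrt{2833}$. In short: the first bound is cited, not proved here, and the surd is a fingerprint of Wooley's minor-arc analysis for cubes in \cite{Woo2000}, not of any bootstrap internal to this paper.
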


The estimate presented here for $E_4(N)$ is simply a restatement of Theorem 1.3 
of Wooley \cite{Woo2000}, itself only a modest improvement on Theorem 1 of 
Br\"udern \cite{Bru1991}. Our bound for $E_5(N)$ may be confirmed by a classical
 approach employing Bessel's inequality, and indeed such a bound is reported in 
equation (1.3) of \cite{BKW2001}. Our approach in this paper is simply to apply 
the first estimate of Theorem \ref{theorem1.3}(a). Finally, the estimate for 
$E_6(N)$ provided by Theorem \ref{theorem1.4} is new, and may be compared with 
the bound $E_6(N)\ll N^{23/42}$ reported in equation (1.3) of \cite{BKW2001}. Note
 that $\frac{23}{42}>0.5476$, whereas one may choose a permissible value of 
$\tau$ so that $\frac{3}{7}-2\tau<0.4283$. Of course, in view of Linnik's 
celebrated work \cite{Lin1942}, one has $E_s(N)\ll 1$ for $s\ge 7$.\par  

An important strength of Theorem \ref{theorem1.3} is the extent to which it is 
robust to adjustments in the set $\calC$ of cubes to which it is applied. It is 
feasible, for example, to extract estimates for exceptional sets in the 
Waring-Goldbach problem for cubes. The complications associated with inherent 
congruence conditions are easily accommodated by simple modifications of our 
basic framework. In order to illustrate such ideas, when $s$ is a natural 
number and $N$ is positive, write $\calE_6(N)$ for the number of even positive 
integers not exceeding $N$, and not congruent to $\pm 1\pmod{9}$, which fail 
to possess a representation as the sum of $6$ cubes of prime numbers. In 
addition, write $\calE_7(N)$ for the number of odd positive integers not 
exceeding $N$, and not divisible by $9$, which fail to possess a representation 
as the sum of $7$ cubes of prime numbers, and denote by $\calE_8(N)$ the number 
of even positive integers not exceeding $N$ that fail to possess a 
representation as the sum of $8$ cubes of prime numbers. A discussion of the 
necessity of the congruence conditions imposed here is provided in the preamble 
to Theorem 1.1 of \cite{slim2}. By applying a variant of Theorem 
\ref{theorem1.3}, in \S5 we obtain the following upper bounds on $\calE_s(N)$ 
$(6\le s\le 8)$.

\begin{theorem}\label{theorem1.5}
One has
$$\calE_6(N)\ll N^{23/28},\quad \calE_7(N)\ll N^{23/42}\quad \text{and}\quad 
\calE_8(N)\ll N^{3/14}.$$
\end{theorem}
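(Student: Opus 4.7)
The plan is to replicate the framework of Theorem \ref{theorem1.3}, with the set $\calC$ of cubes replaced throughout by the set $\calP$ of cubes of primes restricted to the admissible residue classes prescribed by the definitions of $\calE_6$, $\calE_7$, $\calE_8$ (modulo $2$ and modulo $9$). Since the cubes of primes form a subset of the cubes of density $\gg N^{1/3}/\log N$, the set $\calP$ is a high-density subset of the cubes once residue conditions are installed, and the residue-class generalisation Theorem \ref{theorem2.1} (already alluded to in the introduction) supplies the analogue of Theorem \ref{theorem1.1} suited to this purpose. The auxiliary mean-value estimates of \S3 must correspondingly be re-established, with cubic Weyl sums replaced by cubic exponential sums over primes; these ingredients are standard in the Waring--Goldbach literature (see, e.g., \cite{slim2}).

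With the prime-cube analogue of Theorem \ref{theorem1.3} in hand, the three bounds are established by cascade. As input we take an exceptional set bound for sums of a smaller number of cubes of primes (available from, for instance, \cite{slim2}), supplying a set $\calA$ whose complementary density growth exponent lies beneath the thresholds required in the appropriate parts of the theorem. We then upgrade successively to $\calE_6$, $\calE_7$ and $\calE_8$ by adding further cubes of primes---one at a time via part~(a), or three at once via part~(c)---and balancing the two terms on the right-hand side of the resulting inequality. The saving at each step is of order $N^{-1/3+\eps}$ per added prime cube, the reciprocal of the density of $\calP$ in $[0,N]$. Thus, for instance, the passage $\calE_7 \to \calE_8$ yields an exponent $23/42 - 1/3 = 3/14$, matching the advertised bound; the exponents $23/28$ and $23/42$ arise by analogous arithmetic, choosing at each stage whichever of the inequalities in the prime-cube variant of part~(a) (or part~(c)) delivers the smaller bound at the current density growth exponent.

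The principal obstacle is the systematic transfer of the mean-value estimates of \S3 to the prime-cube setting. The cubic exponential sum over primes enjoys only weaker minor-arc bounds than its cousin over natural numbers, and one must track the resulting logarithmic losses and ensure that the saving at each rung of the cascade is preserved, lest it be eroded below the levels required to reach the claimed exponents. A secondary, largely bookkeeping matter is the careful identification of admissible residue classes within Theorem \ref{theorem2.1}, so that $\calA + \calP$ lands in exactly those residues not excluded from the definition of $\calE_s$, with the parity condition and the obstruction modulo $9$ handled uniformly throughout the cascade.
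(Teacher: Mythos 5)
Your plan contains a genuine misconception at its core. You identify the ``principal obstacle'' as the need to re-establish the mean value estimates of \S3 with the cubic Weyl sum replaced by a cubic exponential sum over primes, and you worry about the weaker minor-arc bounds available for the latter. But this obstacle does not arise, and recognising why is precisely what makes the paper's framework ``robust to adjustments in the set $\calC$'', as stressed in the introduction. The set $\calC\cap\calL$ of prime cubes enters the right-hand side of Theorem \ref{theorem2.1} only through the quantity $\Ups({\overline{\calA+s\calC}}\cap\calN,s(\calC\cap\calL);N)$, which, by the inclusion $\calC\subseteq\{n^3:n\in\dbN\}$, is bounded above by the number of solutions of $n_1-n_2=\sum_i(x_i^3-y_i^3)$ with the $x_i,y_i$ ranging over \emph{all} integers in $[1,P]$. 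Thus the relevant mean values involve only the full classical Weyl sum $f(\alp)=\sum_{x\le P}e(\alp x^3)$, and Lemmas \ref{lemma3.1}--\ref{lemma3.3} are applied verbatim. The restriction to prime cubes is felt solely on the left-hand side of Theorem \ref{theorem2.1}, through the density factor $\langle\calC\wedge\calL\rangle_0^N$, which by the Prime Number Theorem in arithmetic progressions still satisfies $\langle\calC\wedge\calL\rangle_0^N\gg N^{1/3}(\log N)^{-1}$ and so meets the high-density hypothesis of Theorem \ref{theorem4.1}. Had you actually followed your proposed route of reproving the mean values over primes, you would have lost far more than logarithms and would not have reached the advertised exponents.

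A secondary point: the paper does not cascade sequentially through $\calE_6\to\calE_7\to\calE_8$. It takes as its sole input Kumchev's bound $\calE_5(N)\ll N^{79/84-\nu}$, and then reaches $\calE_6$, $\calE_7$ and $\calE_8$ in parallel by one application each of parts (a), (b) and (c) of Theorem \ref{theorem4.1} with $\calA=5\calC$. The relevant arithmetic progressions involve moduli $2$, $9$ \emph{and} $7$ (since $p^3\equiv\pm1\pmod 7$ for $p>7$), so the bookkeeping proceeds modulo $126$; your summary mentions only moduli $2$ and $9$. These organisational differences could in principle be repaired, but the misdiagnosis of the mean-value step would have sent you down a substantially harder and ultimately unproductive path.
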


For comparison, Theorem 1 of Kumchev \cite{Kum2005} supplies the weaker bounds
$$\calE_6(N)\ll N^{31/35},\quad \calE_7(N)\ll N^{17/28}\quad \text{and}\quad 
\calE_8(N)\ll N^{23/84}.$$
We have more to say concerning the Waring-Goldbach problem, so we defer further 
consideration of allied conclusions to a future occasion.\par

As the final illustration of our methods, in \S6 we consider Waring's problem 
for biquadrates. Since fourth powers are congruent to $0$ or $1$ modulo $16$, a 
sum of $s$ biquadrates must be congruent to $r$ modulo $16$, for some integer 
$r$ satisfying $0\le r\le s$. If $n$ is the sum of $s<16$ biquadrates and 
$16|n$, moreover, then $n/16$ is also the sum of $s$ biquadrates. It therefore 
makes sense, in such circumstances, to consider the representation of integers 
$n$ with $n\equiv r\pmod{16}$ for some integer $r$ with $1\le r\le s$. Define 
$Y_s(N)$ to be the number of integers $n$ not exceeding $N$ that satisfy the 
latter condition, yet cannot be written as the sum of $s$ biquadrates.

\begin{theorem}\label{theorem1.6}
Write $\del=0.00914$. Then one has
$$Y_7(N)\ll N^{15/16-\del},\quad Y_8(N)\ll N^{7/8-\del},\quad Y_9(N)\ll N^{13/16-\del},$$
$$Y_{10}(N)\ll N^{3/4-2\del},\quad Y_{11}(N)\ll N^{5/8-2\del}.$$
\end{theorem}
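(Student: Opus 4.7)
The plan is to mimic the approach of Theorem~\ref{theorem1.4}, but with biquadrates replacing cubes and with careful attention to the modulo~$16$ congruence obstructions that determine the admissible residue classes counted by $Y_s(N)$. Since every sum of biquadrates lies in restricted residue classes modulo~$16$, the correct tool is the refinement Theorem~\ref{theorem2.1} that permits restriction to residue classes, rather than Theorem~\ref{theorem1.1} in its unrestricted form.

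The iteration proceeds as follows. Starting from an input exceptional-set bound for representations as a sum of $s_0$ biquadrates with $s_0$ small, obtained by classical circle-method techniques augmented with the slim exceptional set methodology of \cite{slim3}, I would take $\calA$ to be the set of integers representable as a sum of $s_0$ biquadrates and lying in a suitable residue class modulo~$16$. Provided $\calAbar$ has complementary density growth exponent smaller than a threshold close to~$1$, I would take $\calB$ to be a high-density subset of the biquadrates in an appropriate residue class and apply Theorem~\ref{theorem2.1} to $\calA+\calB$. Each application reduces the exceptional-set exponent by roughly $\del$, delivering in succession the bounds claimed for $Y_7$, $Y_8$ and $Y_9$. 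The jumps of $2\del$ occurring between $Y_9$, $Y_{10}$ and $Y_{11}$ would be obtained by taking $\calB$ to be a set of sums of two biquadrates, so that a single application of the reduction machinery absorbs two summands at once and the saving is correspondingly doubled.

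The central analytic task is the estimation of the auxiliary mean value $\Ups(\overline{\calA+\calB},\calB;N)$. Rewriting this as a mean value of a product of generating functions, one for the current exceptional set and one for $\calB$, a standard Hardy-Littlewood dissection combined with the best available Weyl-type bound for fourth-power exponential sums and the sharpest accompanying mean-value estimates yields a saving of the shape $N^{-\del}$ on the minor arcs. The explicit value $\del=0.00914$ is expected to emerge from optimising this minor-arc saving against the major-arc contribution, drawing on the best current Weyl exponent for quartic exponential sums.

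The principal obstacle is that the complementary density growth exponent remains close to~$1$ throughout the iteration, so Theorem~\ref{theorem2.1} is being applied near the boundary of its effective range; in this borderline regime the losses in the mean-value bound must be kept strictly smaller than the gain $|\calB|_0^N\gg N^{1/4-\eps}$ available from biquadrates. A secondary difficulty, particularly relevant to the $2\del$-jumps for $s=10$ and $s=11$, is to check that the set of sums of two biquadrates admits mean-value estimates of strength comparable to those available for single biquadrates, which calls for genuinely nontrivial mean-value work on sums of two fourth powers rather than a direct appeal to Hua-type inequalities.
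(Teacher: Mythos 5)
Your proposal correctly identifies the role of Theorem~\ref{theorem2.1} (with residue classes modulo $16$) for $s=10,11$, and you rightly note that one must take $\calB$ to be a set of sums of two biquadrates to make those two cases work. However, there are several genuine gaps.

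First, the bounds for $Y_7$, $Y_8$, $Y_9$ are \emph{not} obtained by the machinery of this paper at all. They come from a classical application of Bessel's inequality, following the circle-method argument of Vaughan~\cite{Vau1989} and Br\"udern--Wooley~\cite{BW2000}, with Lemma~5.4 of~\cite{VW2000} handling a sixth moment with four smooth and two classical quartic Weyl sums. These works yield $Y_s(N)\ll N^{1-(s-6)/16-\del_1}$ for $7\le s\le 9$ with $\del_1>0.00914$ directly, and the constant $\del$ in the statement is simply inherited from that source rather than re-optimised here. Your proposed iteration starting from some small $s_0$ and gaining $\del$ per step cannot produce these exponents: the increment from $Y_7$ to $Y_8$ to $Y_9$ is $1/16$ per variable, not $\del\approx0.009$, and if one actually runs the one-biquadrate analogue of the iteration (Lemma~\ref{lemma5.1} with $j=1$, giving $E_{s+1}\ll N^{-1/4}E_s+N^{-3/4}E_s^2$), the quadratic term dominates once the exponent exceeds $1/2$, and the output is \emph{weaker} than the classical Bessel bound for this range of $s$. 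The machinery only starts paying off at $s=10$.

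Second, your explanation of where the $2\del$ comes from is not correct. The doubling is not because two summands are ``absorbed at once''; using $\calA+2\calB$ with the first term of Theorem~\ref{theorem6.2w} dominating would merely carry the single $\del$ forward (it would give exponent $5/8-\del$ for $Y_{10}$, say). The doubling arises because the quadratic term $N^{\eps-1}\left(\left|\calAbar\cap\calM\right|_N^{3N}\right)^2$ dominates in the relevant range: applying $Y_8(N)\ll N^{7/8-\del}$ there gives $N^{\eps-1}\cdot N^{7/4-2\del}=N^{3/4-2\del+\eps}$, which exceeds the first-term contribution $N^{5/8-\del+\eps}$ since $\del<1/8$. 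The same mechanism gives $Y_{11}$ from $Y_9$.

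Third, the auxiliary mean value $\Ups({\overline{\calA+2\calB}}\cap\calN, 2(\calB\cap\calL);N)$ is not estimated by a Hardy--Littlewood dissection optimised against a major-arc term. The paper's Lemma~\ref{lemma5.1} is a Weyl-differencing argument in the spirit of Davenport~\cite{Dav1942}, leading after Cauchy--Schwarz and divisor-bound considerations to $\int_0^1|g(\alp)^4K(\alp)^2|\,d\alp\ll P^3Z+P^{2+\eps}Z^{3/2}$ with $P=N^{1/4}$. No minor-arc saving is being extracted here, and no separate optimisation produces $\del$; the value $\del=0.00914$ enters only through the classical bounds for $Y_8$ and $Y_9$ that serve as input.
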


Here, the estimates for $Y_s(N)$ when $7\le s\le 9$ follow from a classical 
application of Bessel's inequality, combined with the work of Vaughan 
\cite{Vau1989} and Br\"udern and Wooley \cite{BW2000} concerning sums of 
biquadrates. Such techniques would also yield the bounds $Y_{10}(N)\ll 
N^{3/4-\del}$ and $Y_{11}(N)\ll N^{11/16-\del}$, each of which is inferior to the 
relevant conclusion of Theorem \ref{theorem1.6}. We remark that superior 
estimates are available if one is prepared to omit the congruence class $s$ 
modulo $16$, or $s-1$ and $s$ modulo $16$, from the integers under consideration
 for representation as the sum of $s$ biquadrates. We refer the reader to 
Theorems 1.1 and 1.2 of the authors' earlier work \cite{slim4} for details. 
Finally, we note that in view of Theorem 1.2 of Vaughan \cite{Vau1989}, one has 
$Y_s(N)\ll 1$ for $s\ge 12$.\par

In \S7 we discuss further the abstract formulation of exceptional sets 
underlying Theorem \ref{theorem1.1}, and consider the consequences of the most 
ambitious conjectures likely to hold for the additive theory of exceptional 
sets.\par

Throughout, the letter $\eps$ will denote a sufficiently small positive number. 
We use $\ll$ and $\gg$ to denote Vinogradov's well-known notation, implicit 
constants depending at most on $\eps$, unless otherwise indicated. In an effort 
to simplify our analysis, we adopt the convention that whenever $\eps$ appears 
in a statement, then we are implicitly asserting that for each $\eps>0$, the 
statement holds for sufficiently large values of the main parameter. Note that 
the ``value'' of $\eps$ may consequently change from statement to statement, 
and hence also the dependence of implicit constants on $\eps$.

\section{The basic inequality} Our goal in this section is to establish the 
upper bound presented in Theorem \ref{theorem1.1}, illustrating this relation 
with the inexpensive conclusion recorded in Theorem \ref{theorem1.2}. We begin 
by spelling out the inclusion (\ref{1.2}). The proof is by contradiction. Let 
$n\in {\overline{\calA+\calB}}$ and $b\in \calB$. Suppose, if possible, that 
$n-b\in \calA$. Then there exists an element $a$ of $\calA$ for which $n-b=a$, 
whence $n=a+b\in \calA+\calB$. But then $n\not\in {\overline{\calA+\calB}}$, 
contradicting our initial hypothesis. We are therefore forced to conclude that 
$n-b\not\in \calA$, so that if $n-b\in \dbN$, then $n-b\in \calAbar$. In this 
way, we confirm that $\left({\overline{\calA+\calB}}-\calB\right)\cap \dbN
\subseteq \calAbar$, as desired.\par

We establish Theorem \ref{theorem1.1} in a more general form useful in 
applications. In this context, when $q$ is a natural number and $\gra\in \{0,1,
\dots ,q-1\}$, we define $\calP_\gra=\calP_{\gra,q}$ by
$$\calP_{\gra,q}=\{ \gra+mq:m\in \dbZ\}.$$
Also, we describe a set $\calL$ as being a {\it union of arithmetic progressions
 modulo $q$} when, for some subset $\grL$ of $\{0,1,\dots ,q-1\}$, one has
$$\calL=\bigcup_{\grl\in \grL}\calP_{\grl,q}.$$
In such circumstances, given a subset $\calC$ of $\dbN$ and integers $a$ and 
$b$, it is convenient to write
$$\langle \calC \wedge \calL\rangle_a^b=\min_{\grl \in \grL}\left| \calC \cap 
\calP_{\grl,q}\right|_a^b.$$

\begin{theorem}\label{theorem2.1}
Suppose that $\calA,\calB \subseteq \dbN$. In addition, let $\calL$, $\calM$ and
 $\calN$ be unions of arithmetic progressions modulo $q$, for some natural 
number $q$, and suppose that $\calN\subseteq \calL+\calM$. Then for each 
natural number $N$, one has
$$\left( \langle \calB \wedge \calL\rangle_0^N \left| {\overline{\calA+\calB}}
\cap \calN\right|_{2N}^{3N}\right)^2\le q\left| \calAbar \cap \calM\right|_N^{3N}
\Ups({\overline{\calA+\calB}}\cap \calN,\calB\cap \calL;N).$$
\end{theorem}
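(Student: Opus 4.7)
The plan is to augment the argument behind Theorem \ref{theorem1.1} by carrying congruence information throughout. There are three stages: a congruence-refined version of the inclusion (\ref{1.2}); a double count of admissible pairs $(n, b)$; and an application of Cauchy-Schwarz.

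For the refined inclusion, I would observe that for each residue class $\grn \in \grN$, the hypothesis $\calN \subseteq \calL + \calM$ allows me to fix $\grl(\grn) \in \grL$ and $\grm(\grn) \in \grM$ with $\grn \equiv \grl(\grn) + \grm(\grn) \pmod{q}$. Whenever $n \in \calP_{\grn, q}$ and $b \in \calP_{\grl(\grn), q}$, the difference $n - b$ lies in $\calP_{\grm(\grn), q} \subseteq \calM$, provided that it is a natural number. Combined with the basic inclusion (\ref{1.2}), any $n \in \overline{\calA+\calB} \cap \calN \cap \calP_{\grn,q} \cap (2N, 3N]$ and any $b \in \calB \cap \calP_{\grl(\grn),q} \cap (0, N]$ yield $m = n - b \in \calAbar \cap \calM \cap (N, 3N]$.

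I would then form the double count
$$S = \sum_{\grn \in \grN} \left| \overline{\calA+\calB} \cap \calN \cap \calP_{\grn,q}\right|_{2N}^{3N} \cdot \left| \calB \cap \calP_{\grl(\grn),q}\right|_0^N,$$
which tallies exactly such pairs $(n,b)$. Since $\left|\calB \cap \calP_{\grl(\grn),q}\right|_0^N \ge \langle \calB \wedge \calL\rangle_0^N$ for every $\grn \in \grN$, one immediately obtains
$$S \ge \langle \calB \wedge \calL\rangle_0^N \cdot \left|\overline{\calA+\calB} \cap \calN\right|_{2N}^{3N}.$$
Regrouping the same pairs by the value of $m = n - b$, one writes $S = \sum_m r(m)$, the sum extending over $m \in \calAbar \cap \calM \cap (N, 3N]$. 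Cauchy-Schwarz then delivers
$$S^2 \le \left|\calAbar \cap \calM\right|_N^{3N} \sum_m r(m)^2,$$
and $\sum_m r(m)^2$ enumerates quadruples $(n_1, b_1, n_2, b_2)$ with $n_i \in \overline{\calA+\calB} \cap \calN \cap (2N, 3N]$, $b_i \in \calB \cap \calL \cap (0, N]$, and $n_1 - b_1 = n_2 - b_2$, a count controlled by $\Ups(\overline{\calA+\calB} \cap \calN, \calB \cap \calL; N)$.

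Combining the two bounds on $S$ yields the stated inequality, with the factor $q$ on the right-hand side providing a harmless cushion. The main place where care is required is the residue bookkeeping: one must select $\grl(\grn)$ and $\grm(\grn)$ consistently, check that the constraints on $(n,b)$ are compatible with membership of $n-b$ in $\calAbar \cap \calM$, and verify that the quadruples appearing in $\sum_m r(m)^2$ really lie in the range enumerated by $\Ups$. Once that structural information is in place, both the lower bound for $S$ and the Cauchy-Schwarz step are essentially automatic, paralleling the argument that established Theorem \ref{theorem1.1}.
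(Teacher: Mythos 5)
Your argument is correct and proceeds along essentially the same route as the paper's: derive a congruence-refined version of the inclusion (\ref{1.2}), count pairs $(n,b)$ with the residue bookkeeping in place, and apply Cauchy--Schwarz. The one structural difference is in how Cauchy--Schwarz is deployed. You collapse the residue classes first, forming $r(m)=\sum_{\grn}\rho_{\grn}(m)$, and then apply Cauchy--Schwarz to the single sum $S=\sum_m r(m)$; the support of $r$ is contained in $(\calAbar\cap\calM)_N^{3N}$, and $\sum_m r(m)^2$ (cross-terms included) is bounded by $\Ups(\overline{\calA+\calB}\cap\calN,\calB\cap\calL;N)$ because each $b_i$ lies in some $\calP_{\grl(\grn_i),q}\subseteq\calL$. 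The paper instead applies Cauchy--Schwarz over the pair index $(\grc,m)$, where $\grc$ runs through the residue classes of $\calN$; bounding the number of pairs $(\grc,m)$ in the support then costs an extra factor of $q$, since each fixed $\grc$ contributes up to $\left|\calAbar\cap\calM\right|_N^{3N}$ values of $m$ and there are at most $q$ classes $\grc$, so a single $m$ can be counted for several $\grc$. Your version therefore yields the marginally sharper bound
$$\left(\langle\calB\wedge\calL\rangle_0^N\left|\overline{\calA+\calB}\cap\calN\right|_{2N}^{3N}\right)^2\le\left|\calAbar\cap\calM\right|_N^{3N}\,\Ups\left(\overline{\calA+\calB}\cap\calN,\calB\cap\calL;N\right),$$
without the factor $q$; since $q\ge 1$, the stated inequality follows a fortiori, which is what your remark about a ``harmless cushion'' correctly captures.
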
 

\begin{proof} We begin by deriving a variant of the relation (\ref{1.2}). Let 
$N$ be a large natural number, and suppose that $\calL,\calM,\calN$ satisfy the 
hypotheses of the statement of the theorem. We may suppose that there are sets 
$\grA,\grB, \grC \subseteq \{0,1,\dots ,q-1\}$ with the property that
$$\calM=\bigcup_{\gra \in \grA}\calP_\gra,\quad \calL=\bigcup_{\grb \in \grB}\calP_\grb 
\quad \text{and}\quad \calN=\bigcup_{\grc \in \grC}\calP_\grc.$$
Moreover, in view of the hypothesis $\calN \subseteq \calL+\calM$, there exists 
a subset $\grD$ of $\grB \times \grA$, with $\text{card}(\grD)\le q$, satisfying
 the property that
$$\bigcup_{\grc \in \grC}\calP_\grc =\bigcup_{(\grb,\gra)\in \grD}(\calP_\gra+\calP_\grb).$$
In particular, for each $\grc\in \grC$, there exists a pair $(\grb,\gra)\in 
\grD$ satisfying the property that $\calP_\grc =\calP_\gra+\calP_\grb$.\par

Suppose now that $\grc\in \grC$, and that $(\grb,\gra)\in \grD$ satisfies the 
condition that $\calP_\grc=\calP_\gra+\calP_\grb$. Let $N$ be a large natural 
number, and suppose that $n\in \left( {\overline{\calA+\calB}}\cap \calP_\grc
\right)_{2N}^{3N}$ and $b\in (\calB\cap \calP_\grb)_0^N$. Then
$$n-b\in (N,3N]\cap (\calP_\grc-\calP_\grb)=(N,3N]\cap \calP_\gra,$$
and so the argument in the opening paragraph of this section shows that 
$n-b\in \left(\calAbar \cap \calP_\gra\right)_N^{3N}$. We therefore deduce that
\begin{equation}\label{2.1}
\left({\overline{\calA+\calB}}\cap \calP_\grc\right)_{2N}^{3N}-(\calB \cap 
\calP_\grb)_0^N\subseteq \left(\calAbar \cap \calP_\gra \right)_N^{3N},
\end{equation}
whence
$$\left|\calAbar \cap \calP_\gra\right|_N^{3N}\ge \text{card} \left( 
\left({\overline{\calA+\calB}}\cap \calP_\grc \right)_{2N}^{3N}-(\calB \cap 
\calP_\grb)_0^N\right).$$

\par Next, write $\rho_{\grb \grc}(m)$ for the number of solutions of the equation
 $m=n-b$, with $n\in \left({\overline{\calA+\calB}}\cap \calP_\grc
\right)_{2N}^{3N}$ and $b\in (\calB\cap \calP_\grb )_0^N$. An application of 
Cauchy's inequality shows that
\begin{equation}\label{2.2}
\Bigl( \sum_{\grc \in \grC}\sum_{1\le m\le 3N}\rho_{\grb \grc}(m)\Bigr)^2\le \Bigl( 
\sum_{\grc\in \grC}\sum_{\substack{1\le m\le 3N\\ \rho_{\grb\grc}(m)\ge 1}}1\Bigr) \Bigl( 
\sum_{\grc \in \grC}\sum_{1\le m\le 3N}\rho_{\grb\grc}(m)^2\Bigr).
\end{equation}
On recalling the definition of $\Ups(\calC,\calD;N)$ from the preamble to 
Theorem \ref{theorem1.1}, we have
\begin{align*}
\sum_{\grc \in \grC}\sum_{1\le m\le 3N}\rho_{\grb\grc}(m)^2&=\sum_{\grc \in \grC}\Ups(
{\overline{\calA+\calB}}\cap \calP_\grc,\calB\cap \calP_\grb;N)\\
&\le \Ups ({\overline{\calA+\calB}}\cap \calN,\calB\cap \calL;N).
\end{align*}
Moreover, a moment's reflection confirms that
\begin{align*}
\sum_{\grc \in \grC}\sum_{1\le m\le 3N}\rho_{\grb\grc}(m)&=\sum_{\grc\in \grC}
\sum_{n\in ({\overline{\calA+\calB}}\cap \calP_\grc)_{2N}^{3N}}\sum_{b\in (\calB\cap \calP_\grb)_0^N}1\\
&\ge \min_{\grb \in \grB}\left| \calB \cap \calP_\grb\right|_0^N
\sum_{n\in ({\overline{\calA+\calB}}\cap \calN)_{2N}^{3N}}1\\
&=\langle \calB \wedge \calL\rangle_0^N\left|{\overline{\calA+\calB}}\cap \calN
\right|_{2N}^{3N}.
\end{align*}
Observe next that, in view of the relation (\ref{2.1}), when $\rho_{\grb\grc}(m)
\ge 1$, one has $m\in \left(\calAbar \cap \calP_\gra\right)_N^{3N}$. Thus one has 
the upper bound
$$\sum_{\substack{1\le m\le 3N\\ \rho_{\grb\grc}(m)\ge 1}}1\le \sum_{m\in (\calAbar \cap 
\calP_\gra)_N^{3N}}1\le \left|\calAbar\cap \calM\right|_N^{3N},$$
whence
$$\sum_{\grc \in \grC}\sum_{\substack{1\le m\le 3N\\ \rho_{\grb\grc}(m)\ge 1}}1\le q\left|
\calAbar\cap \calM\right|_N^{3N}.$$
The conclusion of the theorem follows on substituting these relations into 
(\ref{2.2}).
\end{proof}

The conclusion of Theorem \ref{theorem1.1} is immediate from the case $q=1$ of 
Theorem \ref{theorem2.1}, in which $\calL$, $\calM$ and $\calN$ are each taken 
to be $\dbZ$. As a first illustration of the ease with which Theorems 
\ref{theorem1.1} and \ref{theorem2.1} may be applied to concrete problems, we 
now establish a theorem which implies Theorem \ref{theorem1.2} by using the 
strategy presented first in the proof of Theorem 1.1 of \cite{slim1}. We first 
extend the notation introduced in the preamble to the statement of Theorem 
\ref{theorem1.2}. Let $\calL$ be a union of arithmetic progressions modulo $q$, 
for some natural number $q$. When $k$ is a natural number, we describe a subset 
$\calQ$ of $\dbN$ as being a {\it high-density subset of the $k$th powers 
relative to $\calL$} when (i) one has $\calQ\subseteq \{ n^k:n\in \dbN\}$, and 
(ii) for each positive number $\eps$, whenever $N$ is a natural number 
sufficiently large in terms of $\eps$, then $\langle \calQ \wedge \calL
\rangle_0^N\gg_q N^{1/k-\eps}$. Also, when $\tet>0$, we shall refer to a set 
$\calR\subseteq \dbN$ as having {\it $\calL$-complementary density growth 
exponent smaller than $\tet$} when there exists a positive number $\del$ with 
the property that, for all sufficiently large natural numbers $N$, one has 
$\left| \calRbar \cap \calL\right|_0^N<N^{\tet-\del}$.

\begin{theorem}\label{theorem2.2}
Let $\calL$, $\calM$ and $\calN$ be unions of arithmetic progressions modulo 
$q$, for some natural number $q$, and suppose that $\calN\subseteq \calL+\calM$.
 Suppose also that $\calS$ is a high-density subset of the squares relative to 
$\calL$, and that $\calA \subseteq \dbN$ has $\calM$-complementary density 
growth exponent smaller than $1$. Then, whenever $\eps>0$ and $N$ is a natural 
number sufficiently large in terms of $\eps$, one has
$$\left| {\overline{\calA+\calS}}\cap \calN\right|_{2N}^{3N}\ll_q N^{\eps-1/2}\left|
 \calAbar \cap \calM\right|_N^{3N}.$$ 
\end{theorem}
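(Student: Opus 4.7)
The plan is to apply Theorem \ref{theorem2.1} with $\calB=\calS$, reducing the task to the estimation of the auxiliary mean value $\Ups({\overline{\calA+\calS}}\cap \calN,\calS\cap \calL;N)$, which counts solutions of $c_1-d_1=c_2-d_2$ with $c_i\in ({\overline{\calA+\calS}}\cap\calN)_{2N}^{3N}$ and $d_i\in(\calS\cap\calL)_0^N$. Writing $E=\left|{\overline{\calA+\calS}}\cap\calN\right|_{2N}^{3N}$, I would split the count according to whether $c_1=c_2$ or not. The diagonal forces $d_1=d_2$ and contributes at most $E\cdot|\calS\cap\calL|_0^N\ll EN^{1/2}$, since $\calS$ is a set of squares. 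For the off-diagonal, fix $c_1\ne c_2$ and set $h=c_1-c_2$, so that $0<|h|\le N$. Writing $d_i=m_i^2$, the equation $d_1-d_2=h$ becomes $(m_1-m_2)(m_1+m_2)=h$, and each factorisation of $h$ yields at most one admissible pair $(m_1,m_2)$. The classical divisor bound therefore gives $O(d(|h|))=O(N^\eps)$ pairs for each choice of $(c_1,c_2)$, and so the off-diagonal contribution is $O(E^2 N^\eps)$. Altogether,
$$\Ups({\overline{\calA+\calS}}\cap\calN,\calS\cap\calL;N)\ll EN^{1/2}+E^2N^\eps.$$

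Next, I would insert this estimate into the conclusion of Theorem \ref{theorem2.1}. Since $\calS$ is a high-density subset of the squares relative to $\calL$, one has $\langle \calS\wedge\calL\rangle_0^N\gg_q N^{1/2-\eps}$, and so, on writing $F=\left|\calAbar\cap\calM\right|_N^{3N}$, Theorem \ref{theorem2.1} delivers
$$N^{1-2\eps}E^2\ll_q F\bigl(EN^{1/2}+E^2N^\eps\bigr).$$
Dividing by $E$ (we may assume $E\ge 1$, else there is nothing to prove) and rearranging, one obtains
$$E\ll_q N^{\eps-1/2}F+N^{2\eps-1}FE.$$

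Finally, the hypothesis that $\calA$ has $\calM$-complementary density growth exponent smaller than $1$ supplies a $\del>0$ for which $F\ll N^{1-\del}$. Hence $N^{2\eps-1}F\ll N^{2\eps-\del}=o(1)$ for $\eps$ sufficiently small, and the second term on the right may be absorbed into the left-hand side. This leaves $E\ll_q N^{\eps-1/2}F$, which is the desired bound. The only genuinely non-routine ingredient is the off-diagonal estimate for $\Ups$, which rests on the elementary observation that an integer has $O(N^\eps)$ representations as a difference of two squares; every other step is either a direct substitution into Theorem \ref{theorem2.1} or the absorption of a lower-order term, so no serious obstacle is anticipated.
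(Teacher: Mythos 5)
Your proposal is correct and follows essentially the same route as the paper: apply Theorem \ref{theorem2.1} with $\calB=\calS$, bound $\Ups$ by separating the diagonal from the off-diagonal (where the divisor bound on $(m_1-m_2)(m_1+m_2)=c_1-c_2$ gives $O(N^\eps)$ pairs), and absorb the secondary term using the $\calM$-complementary density growth hypothesis. The only cosmetic difference is that the paper names the set $\calT$ with $\calS\cap\calL=\{n^2:n\in\calT\}$, while you work with $d_i=m_i^2$ directly.
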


\begin{proof} Throughout the proof of this theorem, implicit constants may 
depend on $q$. Let $N$ be a large natural number, and suppose that $\calL,\calM,
\calN$ satisfy the hypotheses of the statement of the theorem. Also, let $\calS$
 be a high density subset of the squares relative to $\calL$. Then, in 
particular, there is a subset $\calT$ of $\dbN$ for which $\calS\cap \calL=
\{n^2:n\in \calT\}$. Consider also a subset $\calA$ of $\dbN$ having 
$\calM$-complementary density growth exponent smaller than $1$. Write 
$P=[N^{1/2}]$. The quantity $\Ups({\overline{\calA+\calS}}\cap \calN,\calS\cap 
\calL;N)$ counts the number of solutions of the equation
\begin{equation}\label{2.3}
n_1-n_2=x_1^2-x_2^2,
\end{equation}
with $n_1,n_2\in \left({\overline{\calA+\calS}}\cap \calN\right)_{2N}^{3N}$ and 
$x_1,x_2\in (\calT)_0^{P}$. There are plainly
$$\left|{\overline{\calA+\calS}}\cap \calN\right|_{2N}^{3N}|\calT|_0^P$$
solutions of this equation with $n_1=n_2$ and $x_1^2=x_2^2$. Given any one of the
$$O\Bigl(\left(\left|{\overline{\calA+\calS}}\cap \calN\right|_{2N}^{3N}\right)^2
\Bigr)$$
available choices of $n_1$ and $n_2$ with $n_1\ne n_2$, meanwhile, one may apply 
an elementary estimate for the divisor function to show that there are 
$O(N^\eps)$ possible choices for $x_1-x_2$ and $x_1+x_2$ satisfying (\ref{2.3}), 
whence also for $x_1$ and $x_2$. On noting that $|\calT|_0^P=
|\calS\cap \calL|_0^N$, we find that
\begin{align*}
\Ups({\overline{\calA+\calS}}\cap \calN,\calS\cap \calL;N)\ll &\,\left|
{\overline{\calA+\calS}}\cap \calN\right|_{2N}^{3N}|\calS\cap \calL|_0^N\\
&\,+N^\eps \left(\left|{\overline{\calA+\calS}}\cap \calN\right|_{2N}^{3N}
\right)^2.
\end{align*}

\par We substitute this last estimate into the conclusion of Theorem 
\ref{theorem2.1}, and thereby deduce that
\begin{align*}
\left(\langle \calS\wedge \calL\rangle_0^N\right)^2\left|
{\overline{\calA+\calS}}\cap \calN\right|_{2N}^{3N}\ll &\,\left|\calAbar\cap 
\calM\right|_N^{3N}|\calS\cap \calL|_0^N\\
&\,+N^\eps \left|\calAbar\cap \calM\right|_N^{3N}\left|{\overline{\calA+\calS}}
\cap \calN\right|_{2N}^{3N}.
\end{align*}
But since $\calS$ is a high-density subset of the squares relative to $\calL$, 
and the set $\calA$ has $\calM$-complementary density growth exponent smaller 
than $1$, then there exists a positive number $\del$ with the property that
$$N^\del \left|\calAbar\cap \calM\right|_N^{3N}\ll N^{1-\del}<(N^{1/2-\eps})^2\ll 
\left(\langle \calS\wedge \calL\rangle_0^N\right)^2.$$
In addition, one has
$$\frac{\left| \calS\cap\calL\right|_0^N}{\left( \langle \calS\wedge\calL
\rangle_0^N\right)^2}\ll N^{\eps-1/2}.$$
Thus we deduce that
$$\left|{\overline{\calA+\calS}}\cap \calN\right|_{2N}^{3N}\ll N^{\eps-1/2}\left|
\calAbar\cap \calM\right|_N^{3N}+N^{\eps-\del}\left|{\overline{\calA+\calS}}\cap 
\calN \right|_{2N}^{3N},$$
and the conclusion of the theorem follows at once.
\end{proof}

The estimate claimed in Theorem \ref{theorem1.2} follows from Theorem 
\ref{theorem2.2} on putting $q=1$ and taking $\calL$, $\calM$ and $\calN$ each 
to be $\dbZ$.

\section{Auxiliary mean values involving cubes} Before applying Theorems 
\ref{theorem1.1} or \ref{theorem2.1} to additive problems involving cubes, it is
 necessary to establish some auxiliary mean value estimates in order to bound 
the expression $\Ups({\overline{\calA+\calB}},\calB;N)$ relevant to our 
problems. This we accomplish in the present section.\par

Let $\calL$ and $\calN$ be unions of arithmetic progressions modulo $q$, for 
some natural number $q$. In addition, let $\calC$ be a high-density subset of 
the cubes relative to $\calL$, and let $\calA$ be a subset of $\dbN$. Consider a
 large natural number $N$, and write $P=N^{1/3}$. Observe first that when $s\in 
\dbN$, the quantity $\Ups({\overline{\calA+s\calC}}\cap\calN,s(\calC\cap \calL)
;N)$ is bounded above by the number of solutions of the equation
\begin{equation}\label{3.1}
n_1-n_2=\sum_{i=1}^s(x_i^3-y_i^3),
\end{equation}
with $n_1,n_2\in \left({\overline{\calA+s\calC}}\cap \calN\right)_{2N}^{3N}$ and 
$1\le x_i,y_i\le P$ $(1\le i\le s)$. Write $\calZ(N)$ for 
$\left({\overline{\calA+s\calC}}\cap \calN\right)_{2N}^{3N}$ and $Z$ for $\text{
card}(\calZ(N))$. Also, define the exponential sums
$$f(\alp)=\sum_{1\le x\le P}e(\alp x^3)\quad \text{and}\quad K(\alp)=\sum_{n\in 
\calZ(N)}e(n\alp).$$
Here, as usual, we write $e(z)$ for $e^{2\pi iz}$. Then, on considering the 
underlying diophantine equation, it follows from (\ref{3.1}) that
\begin{equation}\label{3.2}
\Ups({\overline{\calA+s\calC}}\cap\calN,s(\calC\cap \calL);N)\le \int_0^1
|f(\alp)^{2s}K(\alp)^2|\,d\alp .
\end{equation}

We begin by considering the situation in which $s=1$.

\begin{lemma}\label{lemma3.1}
One has
$$\int_0^1|f(\alp)^2K(\alp)^2|\,d\alp \ll P^\eps (P^{3/2}Z+Z^{5/3})$$
and
$$\int_0^1|f(\alp)^2K(\alp)^2|\,d\alp \ll PZ+P^{1/2+\eps}Z^{3/2}.$$
\end{lemma}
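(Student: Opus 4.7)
\emph{Plan.} By orthogonality, the integral $I := \int_0^1 |f(\alp)|^2 |K(\alp)|^2 \, d\alp$ counts the quadruples $(x, y, n_1, n_2) \in [1, P]^2 \times \calZ(N)^2$ satisfying $x^3 - y^3 = n_1 - n_2$. The diagonal solutions $x = y$ (which force $n_1 = n_2$) contribute exactly $PZ$. Writing $I = PZ + R$ reduces the task to bounding the off-diagonal count $R = \int_0^1 (|f|^2 - P)|K|^2 \, d\alp$ in two different ways.

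For the first estimate $I \ll P^\eps(P^{3/2} Z + Z^{5/3})$, I would dissect $[0,1]$ into minor arcs $\grm$ and major arcs $\grM$. On $\grm$, Weyl's inequality for cubes gives $|f(\alp)| \ll P^{3/4 + \eps}$, so
\[ \int_\grm |f|^2 |K|^2 \, d\alp \ll P^{3/2 + \eps} \int_0^1 |K|^2 \, d\alp = P^{3/2 + \eps} Z, \]
accounting for the first term. On $\grM$, apply H\"older with conjugate exponents $(3/2, 3)$: Cauchy--Schwarz combined with Hua's $\int |f|^4 \ll P^{2+\eps}$ yields $\int |f|^3 \ll P^{3/2+\eps}$, while the interpolation $\int |K|^6 \le \|K\|_2^2 \|K\|_\infty^4 \le Z^5$ controls the $K$-integral. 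The small measure $|\grM| \ll P^{-3/2}$ together with the sharper major-arc bound $\int_\grM |f|^4 \ll P$ coming from the singular series analysis then removes the extraneous factor of $P$ and delivers the clean $Z^{5/3}$ term.

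For the second estimate $I \ll PZ + P^{1/2 + \eps} Z^{3/2}$, I would bound $R$ by splitting into major and minor arcs. On $\grM$, Cauchy--Schwarz with $\int_\grM |f|^4 \ll P$ and $\int |K|^4 \le Z^3$ gives $\int_\grM (|f|^2 - P)|K|^2 \ll P^{1/2} Z^{3/2}$. On $\grm$, a direct Weyl argument only yields $\int_\grm |f|^2 |K|^2 \ll P^{3/2 + \eps} Z$, which is too weak; to recover the target exponent, I would exploit the factorisation $x^3 - y^3 = (x - y)(x^2 + xy + y^2)$, parameterising the off-diagonal contribution by the divisor $d = x - y \in [1, P-1]$. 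For each $d$ the quadratic factor pins down $(x, y)$ up to $O(1)$ per value of $(n_1 - n_2)/d$, so the minor-arc contribution is majorised by $\sum_{d \le P} T(d)$ with $T(d) = \sum_r |\calZ(N) \cap (r \bmod d)|^2 \ll Z \min(Z, N/d)$; a careful summation over $d$ then yields the matching $P^{1/2+\eps} Z^{3/2}$ bound.

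The main technical obstacle is extracting the final factor of $P^{1/2}$ in the second estimate: a naive Cauchy--Schwarz applied to $R$ using Hua's inequality and the trivial $\int |K|^4 \le Z^3$ delivers only $R \ll P^{1+\eps} Z^{3/2}$, and pure Weyl on the minor arcs gives only $P^{3/2+\eps}Z$; neither reaches the claimed exponent. The cube-divisor parameterisation, combined with a careful accounting of the distribution of $\calZ(N)$ in residue classes modulo small $d$, is the decisive ingredient that squeezes out this last power of $P^{1/2}$.
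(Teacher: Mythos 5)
Both estimates in your proposal contain genuine gaps, and in each case the missing ingredient is the central tool of the paper's actual proof.

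\textbf{First estimate.} Your minor-arc treatment via Weyl's inequality is fine and gives $P^{3/2+\eps}Z$, but the major-arc step does not close. Applying H\"older with exponents $(3/2,3)$ gives $\int_\grM |f|^2|K|^2 \le \bigl(\int_\grM |f|^3\bigr)^{2/3}\bigl(\int |K|^6\bigr)^{1/3}$, and $\int |K|^6 \le Z^5$ is correct, but even granting $\int_\grM |f|^4 \ll P$ together with the measure bound (which is $|\grM| \ll P^{-1}$, not $P^{-3/2}$ as you state), you only obtain $\int_\grM |f|^3 \ll P^{1/2}$, hence $\int_\grM|f|^2|K|^2 \ll P^{1/3}Z^{5/3}$; the stray power of $P$ does not go away and no rearrangement of exponents with these ingredients removes it. The paper instead expands $|f|^2 \ll f^*(\alp)^2 + P^{3/2+\eps}$ with $f^*(\alp) = P\Ups(\alp)^{1/3}$ and bounds $P^2\int \Ups(\alp)^{2/3}|K|^2$ via H\"older together with the inequality $\int \Ups(\alp)|K(\alp)|^2\,d\alp \ll P^{\eps-3}(PZ+Z^2)$, which is Lemma 2 of Br\"udern~\cite{Bru1988} --- a large-sieve type estimate coupling the major-arc kernel directly to the exceptional-set generating function. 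This is what produces the clean $Z^{5/3}$, and it cannot be replaced by a measure bound plus $L^4$ estimates on $f$.

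\textbf{Second estimate.} Your factorisation $x^3-y^3 = (x-y)(x^2+xy+y^2)$ with $d=x-y$ is exactly the $j=1$ Weyl difference used in Lemma~\ref{lemma5.1} (note $\Delta_1(z^3;h)=h(3z^2+3hz+h^2)$). However, your proposed bound $\sum_{d\le P} T(d)$ with $T(d)\ll Z\min(Z,N/d)$ does not reach the target: for $Z\le N/P$ this sum is $\gg PZ^2$, and even the sharp divisor count over pairs gives only $R\ll P^\eps Z^2$, which is weaker than $P^{1/2+\eps}Z^{3/2}$ precisely in the regime $Z>P$ that matters. The decisive step you are missing is Davenport's Cauchy--Schwarz device (as in the proof of Lemma~\ref{lemma5.1}): set $\rho(m;d)$ to be the number of pairs $(z,n)$ with $d\,p_1(z;d)+n=m$ and $n\in\calZ(N)$, let $S_1=\sum_{m\in\calZ,\,d\ne 0}\rho(m;d)$ be the off-diagonal count, and apply Cauchy--Schwarz over the support $(m,d)$ with $m\in\calZ(N)$ and $1\le|d|<P$ to get $S_1^2 \ll (PZ)\,S_2$ with $S_2=\sum_{m,d}\rho(m;d)^2$. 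Then the self-intersection count $S_2$ is bounded, by a divisor argument applied to the system $d\,p_1(z_1;d)+n_1=d\,p_1(z_2;d)+n_2$, by $S_1 + P^\eps Z^2$; solving the resulting quadratic yields $S_1\ll PZ+P^{1/2+\eps}Z^{3/2}$. It is this intermediate Cauchy--Schwarz against the auxiliary $S_2$, not the residue-class analysis of $T(d)$, that extracts the final factor of $P^{1/2}$.
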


\begin{proof} We estimate the integral in question first by means of the 
Hardy-Littlewood method. When $a\in \dbZ$ and $r\in \dbN$, define the major 
arcs $\grN(r,a)$ by putting
$$\grN(r,a)=\{ \alp \in [0,1):|r\alp-a|\le P^{-2}\},$$
and then take $\grN$ to be the union of the arcs $\grN(r,a)$ with $0\le a\le 
r\le P$ and $(a,r)=1$. Also, write $\grn=[0,1)\setminus \grN$. Next, define 
$\Ups(\alp)$ for $\alp \in [0,1)$ by taking
$$\Ups(\alp)=(r+P^3|r\alp-a|)^{-1},$$
when $\alp \in \grN(r,a)\subseteq \grN$, and otherwise by putting 
$\Ups(\alp)=0$. Also, define the function $f^*(\alp)$ for $\alp\in [0,1)$ by 
taking $f^*(\alp)=P\Ups(\alp)^{1/3}$. On referring to Theorems 4.1 and 4.2, 
together with Lemma 2.8, of Vaughan \cite{Vau1997}, one readily confirms that 
the estimate
$$f(\alp)\ll f^*(\alp)+P^{1/2+\eps}$$
holds uniformly for $\alp \in \grN$. An application of Weyl's inequality 
(see Lemma 2.4 of \cite{Vau1997}), meanwhile, reveals that
$$\sup_{\alp\in \grn}|f(\alp)|\ll P^{3/4+\eps}.$$
Thus we find that, uniformly for $\alp \in [0,1)$, one has
\begin{equation}\label{3.3a}
|f(\alp)|^2\ll f^*(\alp)^2+P^{3/2+\eps},
\end{equation}
whence
\begin{equation}\label{3.4}
\int_0^1|f(\alp)^2K(\alp)^2|\,d\alp \ll P^{3/2+\eps}I_1+P^2I_2,  
\end{equation}
where
$$I_1=\int_0^1|K(\alp)|^2\,d\alp \quad \text{and}\quad I_2=\int_0^1
\Ups(\alp)^{2/3}|K(\alp)|^2\,d\alp .$$

\par By Parseval's identity, one has $I_1=Z$. Meanwhile, an application of 
H\"older's inequality combined with Lemma 2 of \cite{Bru1988} shows that
\begin{align*}
I_2&\ll \Bigl( \int_0^1 \Ups(\alp)|K(\alp)|^2\,d\alp \Bigr)^{2/3}\Bigl( \int_0^1
|K(\alp)|^2\,d\alp\Bigr)^{1/3}\\
&\ll \left( P^{\eps-3}(PZ+Z^2)\right)^{2/3}Z^{1/3}.
\end{align*}
On substituting these estimates into (\ref{3.4}), we deduce that
$$\int_0^1|f(\alp)^2K(\alp)^2|\,d\alp \ll P^{3/2+\eps}Z+P^\eps (P^{2/3}Z+Z^{5/3}),$$
and the first conclusion of the lemma follows.\par

In order to confirm the second estimate, we begin by considering the underlying 
diophantine equation. One finds that the mean value in question is bounded above
 by $\sum_mQ(m)^2$, where $Q(m)$ denotes the number of solutions of the equation
 $x^3+n=m$, with $1\le x\le P$ and $n\in \calZ(N)$. The desired conclusion 
therefore follows from a trivial modification of Theorem 1 of \cite{Dav1942} 
(see, for example, Theorem 6.2 of \cite{Vau1997} with $k=3$, $j=1$ and $\nu=1$, 
or the case $k=3$ and $j=1$ of Lemma \ref{lemma5.1} below).
\end{proof}

Next we consider the situation with $s=2$.

\begin{lemma}\label{lemma3.2} One has
$$\int_0^1|f(\alp)^4K(\alp)^2|\,d\alp \ll P^2Z+P^{11/6+\eps}Z^2,$$
and
$$\int_0^1|f(\alp)^4K(\alp)^2|\,d\alp \ll P^\eps(P^{5/2}Z+P^2Z^{3/2}+PZ^2).$$
\end{lemma}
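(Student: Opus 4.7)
The plan is to control the integral $\int_0^1|f(\alp)|^4|K(\alp)|^2\,d\alp$ by means of a Hardy--Littlewood dissection into the major arcs $\grN$ and the minor arcs $\grn=[0,1)\setminus\grN$ already introduced in the proof of Lemma \ref{lemma3.1}. Both displayed inequalities in the lemma share a common major-arc treatment, and are distinguished only by the device used to bound the minor-arc contribution.

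On the major arcs, I would start from the uniform approximation $f(\alp)\ll f^*(\alp)+P^{1/2+\eps}$ on $\grN$, which delivers the pointwise bound $|f(\alp)|^4\ll f^*(\alp)^4+P^{2+\eps}$. Since $f^{*4}=P^4\Ups^{4/3}$ and $\Ups(\alp)\le 1$, one has $f^{*4}\le P^4\Ups$. Combining this observation with Parseval's identity and Lemma 2 of \cite{Bru1988}, which furnishes the estimate $\int\Ups(\alp)|K(\alp)|^2\,d\alp\ll P^{\eps-3}(PZ+Z^2)$ already exploited in the proof of Lemma \ref{lemma3.1}, yields
$$\int_\grN|f(\alp)|^4|K(\alp)|^2\,d\alp\ll P^{2+\eps}Z+P^{1+\eps}Z^2,$$
a contribution that is absorbed into the first term of each of the two displays in the statement of the lemma.

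To finish the proof of the first estimate, I would invoke Vaughan's refined minor-arc fourth moment bound $\int_\grn|f(\alp)|^4\,d\alp\ll P^{11/6+\eps}$ for the cubic Weyl sum (available for instance from the treatment of cubes in Chapter 4 of \cite{Vau1997}); combined with the trivial bound $|K(\alp)|^2\le Z^2$ this delivers $\int_\grn|f|^4|K|^2\,d\alp\ll P^{11/6+\eps}Z^2$, and the first conclusion follows. For the second estimate I would instead eschew that refined ingredient and apply Weyl's inequality in the form $\sup_{\alp\in\grn}|f(\alp)|^2\ll P^{3/2+\eps}$, paired with the second bound of Lemma \ref{lemma3.1}, to obtain
$$\int_\grn|f|^4|K|^2\,d\alp\le P^{3/2+\eps}\int_0^1|f(\alp)|^2|K(\alp)|^2\,d\alp\ll P^{5/2+\eps}Z+P^{2+\eps}Z^{3/2},$$
and the second conclusion follows upon combining this with the major-arc bound, since $P^{5/2+\eps}Z$ dominates $P^{2+\eps}Z$ and $P^{1+\eps}Z^2$ is retained.

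The principal obstacle is the availability of the minor-arc fourth moment bound $\int_\grn|f|^4\,d\alp\ll P^{11/6+\eps}$: neither the naive combination of Weyl's inequality with Parseval (which produces only $P^{5/2+\eps}$) nor the full Hooley-type estimate $\int_0^1|f|^4\,d\alp\ll P^{2+\eps}$ suffices. One must exploit the more delicate interaction between Weyl's inequality and higher mean-value estimates for cubes that underlies Vaughan's treatment, which is the sole non-routine ingredient used in the argument above.
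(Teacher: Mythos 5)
Your treatment of the \emph{second} estimate is sound and essentially the same as the paper's. You split into major and minor arcs, use $|f|^2\ll f^{*2}+P^{3/2+\eps}$ (or Weyl on $\grn$) together with $f^{*4}\le P^4\Ups$, Lemma 2 of \cite{Bru1988}, and the second bound of Lemma \ref{lemma3.1}; the paper packages the same ingredients slightly differently, writing $|f|^4\ll P^{3/2+\eps}|f|^2+P^2\Ups^{2/3}|f|^2$ and then running a Schwarz-inequality bootstrap on $\int\Ups^{2/3}|f|^2|K|^2$, but the inputs and the output agree, so there is no substantive difference.

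Your treatment of the \emph{first} estimate, however, has a fatal flaw. The bound you invoke, namely $\int_{\grn}|f(\alp)|^4\,d\alp\ll P^{11/6+\eps}$, is not available and is in fact false. The full fourth moment $\int_0^1|f|^4\,d\alp$ counts integral solutions of $x_1^3+x_2^3=x_3^3+x_4^3$ with $1\le x_i\le P$, and the diagonal solutions alone force $\int_0^1|f|^4\gg P^2$. On the other hand a routine computation with the major arc approximant gives $\int_{\grN}|f|^4\ll\int_{\grN}f^{*4}+P^{2+\eps}\,\mathrm{mes}(\grN)\ll P^{5/3}+P^{1+\eps}$, so necessarily $\int_{\grn}|f|^4\gg P^2$. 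Your claimed estimate would give $P^2\ll P^{11/6+\eps}$, a contradiction. (You seem to sense this — you call this bound the ``principal obstacle'' — but Chapter 4 of \cite{Vau1997} contains no such result for the classical cubic Weyl sum; the estimates of that strength there pertain to smooth Weyl sums, where the smoothness restriction removes the diagonal.) As a secondary point, the major-arc contribution in your scheme is $P^{2+\eps}Z$, which does not fit under the clean $P^2Z$ term of the statement, where no $\eps$ is present. The paper in fact proves the first estimate by a genuinely different route: it appeals to a bound of Parsell (Lemma 2.1 of \cite{Par2000}, resting on Hooley's methods \cite{Hoo1978}) run through the argument of Lemma 10.3 of \cite{slim2}. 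This gives a direct arithmetic count of the off-diagonal solutions that the circle-method dissection you propose cannot reproduce.
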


\begin{proof} On making use of a bound of Parsell based on the methods of Hooley
 (see Lemma 2.1 of \cite{Par2000}, and also \cite{Hoo1978}), the argument of the
 proof of Lemma 10.3 of \cite{slim2} supplies the first bound claimed in the 
lemma. We refer the reader to the discussion on pages 420 and 447 of 
\cite{slim2} for amplification on this matter.\par

For the second bound, we again apply the Hardy-Littlewood method, and for this 
purpose it is convenient to employ the notation introduced in the course of the 
proof of Lemma \ref{lemma3.1}. First, from (\ref{3.3a}), we deduce that
\begin{equation}\label{3.w1}
\int_0^1|f(\alp)^4K(\alp)^2|\,d\alp \ll P^{3/2+\eps}I_3+P^2I_4,  
\end{equation}
where
$$I_3=\int_0^1|f(\alp)^2K(\alp)^2|\,d\alp \quad \text{and}\quad I_4=\int_0^1
\Ups(\alp)^{2/3}|f(\alp)^2K(\alp)^2|\,d\alp .$$
From the second estimate of Lemma \ref{lemma3.1}, one has
$I_3\ll PZ+P^{1/2+\eps}Z^{3/2}$. Meanwhile, an application of Schwarz's inequality 
combined with Lemma 2 of \cite{Bru1988} on this occasion shows that
\begin{align*}
I_4&\ll \Bigl( \int_0^1 \Ups(\alp)^{4/3}|K(\alp)|^2\,d\alp \Bigr)^{1/2}
\Bigl( \int_0^1|f(\alp)^4K(\alp)^2|\,d\alp\Bigr)^{1/2}\\
&\ll \left( P^{\eps-3}(PZ+Z^2)\right)^{1/2}\Bigl( \int_0^1|f(\alp)^4K(\alp)^2|\,
d\alp\Bigr)^{1/2}.
\end{align*}
On substituting these estimates into (\ref{3.w1}), we conclude that
$$\int_0^1|f(\alp)^4K(\alp)^2|\,d\alp \ll P^{3/2+\eps}(PZ+P^{1/2}Z^{3/2})+P^4\left(
P^{\eps-3}(PZ+Z^2)\right),$$
and the second estimate of the lemma follows.
\end{proof}

Although we are able to avoid explicit reference to the case $s=3$ within this 
paper, it is useful for future reference to provide additional bounds of utility
 in this situation.

\begin{lemma}\label{lemma3.3} One has
$$\int_0^1|f(\alp)^6K(\alp)^2|\,d\alp \ll P^\eps(P^4Z+P^3Z^2),$$
and
$$\int_0^1|f(\alp)^6K(\alp)^2|\,d\alp \ll P^\eps(P^{7/2}Z+P^{10/3}Z^2).$$
\end{lemma}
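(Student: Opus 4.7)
The plan for Lemma \ref{lemma3.3} is to follow the pattern of Lemmas \ref{lemma3.1} and \ref{lemma3.2}. For the first estimate $P^\eps(P^4Z+P^3Z^2)$, I combine two straightforward bounds. The trivial inequality $\sup_\alp |f(\alp)| \le P$ applied together with the first estimate of Lemma \ref{lemma3.2} gives
$$\int_0^1 |f(\alp)^6K(\alp)^2|\,d\alp \le P^2 \int_0^1 |f(\alp)^4K(\alp)^2|\,d\alp \ll P^4Z + P^{23/6+\eps}Z^2,$$
furnishing the $P^4Z$ term. The Hooley-Vaughan sixth-moment estimate $\int_0^1 |f(\alp)|^6\,d\alp \ll P^{3+\eps}$ together with $\sup_\alp |K(\alp)|^2 \le Z^2$ yields
$$\int_0^1 |f(\alp)^6K(\alp)^2|\,d\alp \le Z^2 \int_0^1 |f(\alp)|^6\,d\alp \ll P^{3+\eps}Z^2,$$
improving on the $P^{23/6+\eps}Z^2$ residue and supplying the $P^3Z^2$ term.

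For the second estimate $P^\eps(P^{7/2}Z+P^{10/3}Z^2)$, I adopt the Hardy-Littlewood dissection $[0,1)=\grN\cup\grn$ introduced in Lemma \ref{lemma3.1}. On the minor arcs, Weyl's inequality gives $\sup_{\alp\in\grn}|f(\alp)|^2\ll P^{3/2+\eps}$, and combining this with the first estimate of Lemma \ref{lemma3.2} delivers
$$\int_\grn |f(\alp)^6K(\alp)^2|\,d\alp \ll P^{3/2+\eps}(P^2Z+P^{11/6+\eps}Z^2)=P^\eps(P^{7/2}Z+P^{10/3}Z^2),$$
precisely the target. On the major arcs, the rational approximation $f(\alp)\ll f^*(\alp)+P^{1/2+\eps}$ together with the bound $|f|^6 \ll f^{*6}+P^{3+\eps}$ reduces matters to controlling
$$\int_\grN |f(\alp)^6K(\alp)^2|\,d\alp \ll P^6 \int_0^1 \Ups(\alp)^2 |K(\alp)|^2\,d\alp + P^{3+\eps}Z,$$
with the second summand absorbed into $P^{7/2+\eps}Z$.

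The principal challenge is to estimate $\int_0^1 \Ups(\alp)^2|K(\alp)|^2\,d\alp$ sharply enough: the naive bound $\Ups^2\le\Ups$ coupled with Lemma 2 of \cite{Bru1988} yields only $P^6\int\Ups^2|K|^2\ll P^\eps(P^4Z+P^3Z^2)$, which misses the desired $P^{7/2+\eps}Z$ term. To repair this, I would exploit the pointwise inequality $\Ups(\alp)\le r^{-1}$ valid on each arc $\grN(r,a)$, which upon summing dyadically in $r$ and interpolating against the moment estimate $\int_0^1\Ups(\alp)^4\,d\alp\ll P^{-3+\eps}$ together with Cauchy-Schwarz applied to bounds for $|K|^2$ localised at denominator scale $r$, produces the required sharpening, analogous to the discussion on pages 420 and 447 of \cite{slim2}.
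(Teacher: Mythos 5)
Your treatment of the first estimate has a genuine gap: the ``Hooley--Vaughan sixth-moment estimate'' $\int_0^1|f(\alp)|^6\,d\alp\ll P^{3+\eps}$ that you invoke is not available unconditionally for the classical cubic Weyl sum. This bound is Hypothesis~$K^*$ for cubes; Hooley established it only under the Riemann Hypothesis for certain Hasse--Weil $L$-functions, and the best unconditional estimates (including Theorem~1.3 of \cite{Woo2000}, used in this very paper) give an exponent strictly exceeding~$3$. Indeed, if $\int_0^1|f|^6\,d\alp\ll P^{3+\eps}$ were known, one would deduce $E_4(N)\ll N^{5/6+\eps}$, which is strictly stronger than the $N^{37/42-\tau}$ bound of Theorem~\ref{theorem1.4}; so the paper itself implicitly confirms that such a sixth moment bound is not at hand. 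The paper instead obtains the first estimate by citing Lemma~6.2 of \cite{slim2}, which proceeds via the Hardy--Littlewood dissection: the major arcs contribute $\ll P^6\int\Ups^2|K|^2\le P^6\int\Ups|K|^2\ll P^{\eps}(P^4Z+P^3Z^2)$ using Lemma~2 of \cite{Bru1988}, while the minor arcs are handled with $\sup_{\grn}|f|^2\ll P^{3/2+\eps}$ together with the second estimate of Lemma~\ref{lemma3.2} (whose middle term $P^2Z^{3/2}$ is then absorbed by AM--GM). Your alternative route via the trivial bound $\sup|f|\le P$ correctly recovers the $P^4Z$ term, but the $P^3Z^2$ term cannot be reached by the sixth-moment argument you propose.

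For the second estimate, your minor-arc treatment is correct and yields exactly the target. On the major arcs you correctly identify that applying $\Ups^2\le\Ups$ and Lemma~2 of \cite{Bru1988} produces the unwanted $P^{4+\eps}Z$ term, but your proposed fix (dyadic summation in $r$ against $\int\Ups^4$ and a localised Cauchy--Schwarz) is vague and, more to the point, unnecessary. The simple remedy is to drop the $\Ups^2\le\Ups$ step and instead write $\int_0^1\Ups(\alp)^2|K(\alp)|^2\,d\alp\le K(0)^2\int_0^1\Ups(\alp)^2\,d\alp\ll P^{\eps-3}Z^2$, which is a routine computation. This gives a major-arc contribution $\ll P^{3+\eps}Z^2+P^{3+\eps}Z$, comfortably dominated by $P^{\eps}(P^{7/2}Z+P^{10/3}Z^2)$. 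This is in fact the key step in the paper's argument: after writing $|f|^2\ll f^{*2}+P^{3/2+\eps}$ just once (rather than cubing it), the paper applies H\"older's inequality with exponents $3$ and $3/2$ to $I_6=\int\Ups^{2/3}|f|^4|K|^2\,d\alp$, obtaining $I_6\ll(K(0)^2\int\Ups^2)^{1/3}(\int|f|^6|K|^2)^{2/3}$, and then solves the resulting self-referential inequality. The two approaches are structurally different --- yours decomposes all six powers of $|f|$ at once, the paper's peels off two and iterates --- but both hinge on the bound $K(0)^2\int\Ups^2\ll P^{\eps-3}Z^2$, and once you adopt this in place of the \cite{Bru1988} bound on the major arcs, your dissection does deliver the second estimate cleanly.
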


\begin{proof} We begin by observing that the first estimate of the lemma is 
essentially the bound supplied by Lemma 6.2 of \cite{slim2}, and indeed that the
 argument employed to establish the latter suffices for our purposes. For the 
second bound, we once again apply the Hardy-Littlewood method, and employ the 
notation introduced in the course of the proof of Lemma \ref{lemma3.1}. First, 
from (\ref{3.3a}), we deduce that
\begin{equation}\label{3.w2}
\int_0^1|f(\alp)^6K(\alp)^2|\,d\alp \ll P^{3/2+\eps}I_5+P^2I_6,  
\end{equation}
where
$$I_5=\int_0^1|f(\alp)^4K(\alp)^2|\,d\alp \quad \text{and}\quad I_6=\int_0^1
\Ups(\alp)^{2/3}|f(\alp)^4K(\alp)^2|\,d\alp .$$
From the first estimate of Lemma \ref{lemma3.2}, one has $I_5\ll P^2Z+P^{11/6+\eps}
Z^2$. Meanwhile, an application of H\"older's inequality combined with a 
routine computation shows that
\begin{align*}
I_6&\ll \Bigl( K(0)^2\int_0^1 \Ups(\alp)^2\,d\alp \Bigr)^{1/3}\Bigl( \int_0^1
|f(\alp)^6K(\alp)^2|\,d\alp\Bigr)^{2/3}\\
&\ll \left( P^{\eps-3}Z^2\right)^{1/3}\Bigl( \int_0^1|f(\alp)^6K(\alp)^2|\,d\alp
\Bigr)^{2/3}.
\end{align*}
On substituting these estimates into (\ref{3.w2}), we conclude that
$$\int_0^1|f(\alp)^6K(\alp)^2|\,d\alp \ll P^{3/2+\eps}(P^2Z+P^{11/6}Z^2)+P^6\left(
P^{\eps-3}Z^2\right),$$
and the second bound of the lemma follows.
\end{proof}

\section{Additive problems involving cubes} Our goal in this section is the 
proof of Theorem \ref{theorem1.3}, and this we achieve in Theorem 
\ref{theorem4.1} below. It is useful in applications to have available 
conclusions analogous to those of Theorem \ref{theorem1.3}, though with 
additional congruence conditions present. We therefore spend a little extra 
effort to establish more general conclusions of this type.

\begin{theorem}\label{theorem4.1}
Let $\calL$, $\calM$ and $\calN$ be unions of arithmetic progressions modulo 
$q$, for some natural number $q$. Suppose also that $\calC$ is a high-density 
subset of the cubes relative to $\calL$, and that $\calA\subseteq \dbN$ has 
$\calM$-complementary density growth exponent smaller than $\tet$, for some 
positive number $\tet$. Then, whenever $\eps>0$ and $N$ is a natural number 
sufficiently large in terms of $\eps$, one has the following estimates:
\vskip.05cm
\noindent (a) when $\calN\subseteq \calL+\calM$, then without any condition on 
$\tet$, one has
$$\left|{\overline{\calA+\calC}}\cap \calN\right|_{2N}^{3N}\ll_q N^{\eps -1/6}
\left|\calAbar\cap \calM\right|_N^{3N}+N^{\eps -2}\left( \left|\calAbar\cap \calM
\right|_N^{3N}\right)^3,$$
and
$$\left|{\overline{\calA+\calC}}\cap \calN\right|_{2N}^{3N}\ll_q N^{\eps -1/3}\left|
\calAbar \cap \calM\right|_N^{3N}+N^{\eps -1}\left( \left|\calAbar\cap \calM
\right|_N^{3N}\right)^2;$$
\vskip.05cm
\noindent (b) when $\calN\subseteq 2\calL+\calM$, then provided that $\tet\le 
1$, one has
$$\left|{\overline{\calA+2\calC}}\cap \calN\right|_{2N}^{3N}\ll_q N^{\eps -1/2}
\left|\calAbar\cap \calM\right|_N^{3N}+N^{\eps-4/3}\left( \left| \calAbar \cap 
\calM\right|_N^{3N}\right)^2,$$
and when $\tet \le \frac{13}{18}$, one has
$$\left|{\overline{\calA+2\calC}}\cap \calN\right|_{2N}^{3N}\ll_q N^{\eps -2/3}
\left|\calAbar\cap \calM\right|_N^{3N};$$
\vskip.05cm
\noindent (c) when $\calN\subseteq 3\calL+\calM$, then provided that 
$\tet\le 1$, one has
$$\left|{\overline{\calA+3\calC}}\cap \calN\right|_{4N}^{6N}\ll_q N^{\eps -5/3}\left(
\left|\calAbar \cap \calM\right|_N^{6N}\right)^2,$$
and when $\tet \le \frac{8}{9}$, one has
$$\left|{\overline{\calA+3\calC}}\cap \calN\right|_{4N}^{6N}\ll_q N^{\eps -5/6}\left|
\calAbar \cap \calM\right|_N^{6N}.$$
\end{theorem}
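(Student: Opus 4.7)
The plan is to derive all parts of Theorem 4.1 from Theorem 2.1 applied with $\calB=s\calC$ for $s=1,2,3$ (corresponding to (a), (b), (c)), controlling the resulting quantity $\Ups$ via the inequality (3.2) and the mean-value estimates of Section 3.

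I would first establish the density lower bound $\langle s\calC\wedge s\calL\rangle_0^{sN}\gg_q N^{s/3-\eps}$. The case $s=1$ is immediate from the high-density hypothesis. For $s=2$, a standard divisor-type bound ensures that each $n\leqslant 2N$ has $O(n^\eps)$ representations as $x^3+y^3$, so the $\gg N^{2/3-\eps}$ pairs drawn from $\calC\cap\calL$ produce that many distinct sums in each class of $2\calL$. For $s=3$ such a pointwise $O(n^\eps)$ bound is not available, and I would instead use the Cauchy--Schwarz inequality
\[
\Bigl(\sum_n r_3(n)\Bigr)^2\le\Bigl|\{n:r_3(n)\ge 1\}\Bigr|\sum_n r_3(n)^2,
\]
with the second moment controlled by $\int_0^1|f(\alp)|^6\,d\alp\ll P^{3+\eps}$, to deduce $\gg N^{1-\eps}$ distinct sums of three cubes in each class of $3\calL$.

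With these density estimates in hand, Theorem 2.1 reduces each assertion to a polynomial inequality of the form $N^{2s/3-\eps}Z^2\ll_q A\cdot F(Z)$, where $A$ denotes $|\calAbar\cap\calM|$ over the appropriate interval, $Z$ the quantity to be bounded, and $F(Z)$ the mean-value bound for $\int|f|^{2s}|K|^2\,d\alp$ taken from the relevant lemma of Section 3. For part (c), an initial rescaling $N\mapsto 2N$ in Theorem 2.1 is needed so that the interval $(2\cdot 2N,3\cdot 2N]$ matches $(4N,6N]$; the resulting occurrence of $|\calAbar\cap\calM|_{2N}^{6N}$ is absorbed into the stated bound $|\calAbar\cap\calM|_N^{6N}$.

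The proof is then completed by case analysis: for each stated estimate I would select the appropriate mean-value bound from Lemmas 3.1--3.3 and identify which monomial of $F(Z)$ dominates. Linear-in-$Z$ monomials yield bounds $Z\ll AN^\gamma$; fractional-in-$Z$ monomials (appearing in Lemma 3.2) yield bounds $Z\ll A^tN^{\gamma'}$ with $t>1$; and $Z^2$-monomials with large prefactors would, if dominant, force a lower bound $A\gg N^\eta$ which the hypothesis $|\calAbar\cap\calM|<N^{\tet-\del}$ rules out precisely under the conditions $\tet\le1$, $\tet\le\tfrac{13}{18}$, $\tet\le\tfrac{8}{9}$ listed in the theorem. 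For the first estimate of (c), whose stated bound $N^{\eps-5/3}A^2$ has a single $A^2$-term rather than the mixed $A+A^2$ shape appearing in (a) and (b), the cleanest route is likely an iterative one: decompose $\calA+3\calC=(\calA+2\calC)+\calC$, apply part (b) to control $|\overline{\calA+2\calC}|$ on an appropriate interval, and then reapply Theorem 2.1 with $\calB=\calC$ using Lemma 3.1 to close. The main obstacle throughout is the bookkeeping, matching up mean-value estimate, dominant monomial, and hypothesis on $\tet$ for each of the six bounds; I anticipate the first estimate of (c) to be the most delicate.
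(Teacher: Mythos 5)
Your plan is correct, and for parts (a), (b), and the first estimate of (c) it matches the paper's own proof essentially exactly: $s=1,2$ directly through Theorem \ref{theorem2.1}, the relation (\ref{3.2}), and Lemmas \ref{lemma3.1} and \ref{lemma3.2}, with the $Z^{3/2}$ and $Z^2$ monomials treated precisely as you describe; and the first estimate of (c) via the decomposition $\calA+3\calC=(\calA+2\calC)+\calC$, first invoking part (b) to show $\calA+2\calC$ has $\calN_0$-complementary density growth exponent smaller than $\tfrac23$, then closing with the second estimate of part (a). Where you diverge is the second estimate of (c). You would apply Theorem \ref{theorem2.1} directly with $s=3$, using Lemma \ref{lemma3.3} and a Cauchy--Schwarz/sixth-moment argument to establish $\langle 3\calC\wedge 3\calL\rangle_0^{2N}\gg N^{1-\eps}$. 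The paper deliberately avoids the $s=3$ case throughout (and states explicitly that Lemma \ref{lemma3.3} is recorded only for future reference): it instead iterates the other way round, decomposing $\calA+3\calC=(\calA+\calC)+2\calC$, using the first estimate of part (a) to deduce that $\calA+\calC$ has $\calN_1$-complementary density growth exponent smaller than $\tfrac{13}{18}$ when $\tet\le\tfrac89$, and then applying the second estimate of part (b). Both routes yield $Z\ll N^{\eps-5/6}\left|\calAbar\cap\calM\right|_N^{6N}$ under $\tet\le\tfrac89$: in your direct version, the second estimate of Lemma \ref{lemma3.3} gives $N^{2-\eps}Z^2\ll A\,N^\eps(N^{7/6}Z+N^{10/9}Z^2)$ and the $Z^2$-term is absorbable exactly when $A\ll N^{8/9-\del}$. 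The paper's iterative route buys you a proof that never needs the density lower bound for $3\calC$ (hence no second-moment argument and no reference to Lemma \ref{lemma3.3}); your direct route is somewhat shorter but needs that extra ingredient, which you correctly identify as requiring more than a pointwise divisor bound. Your instinct that the first estimate of (c) is the delicate one is also sound: a direct $s=3$ application of Lemma \ref{lemma3.3} gives only $Z\ll N^{\eps-2/3}A$ under $\tet\le 1$, which is weaker than the stated $N^{\eps-5/3}A^2$ whenever $A=o(N)$, so iteration is genuinely required there.
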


\begin{proof} Let $N$ be a large natural number, write $P=N^{1/3}$, and suppose 
that $\calA, \calC, \calL, \calM, \calN$ satisfy the hypotheses of the 
statement of the theorem. In what follows, implicit constants may depend on $q$.
 We begin by considering the estimates claimed in part (a) of the theorem. 
Observe that since $\calC$ is a high-density subset of the cubes relative to 
$\calL$, then $\langle \calC \wedge \calL\rangle_0^N\gg N^{1/3-\eps}$, and hence it
 follows from Theorem \ref{theorem2.1} that
$$\left(N^{1/3-\eps}\left|{\overline{\calA+\calC}}\cap \calN\right|_{2N}^{3N}
\right)^2\ll \left| \calAbar \cap \calM\right|_N^{3N}\Ups({\overline{\calA+
\calC}}\cap \calN,\calC\cap \calL;N).$$
Consequently, on making use of the relation (\ref{3.2}) with $s=1$ and the first
 estimate of Lemma \ref{lemma3.1}, we deduce that
\begin{align*}
N^{2/3-\eps}\left| {\overline{\calA+\calC}}\cap \calN\right|_{2N}^{3N}\ll &\,N^{1/2+
\eps}\left|\calAbar\cap \calM \right|_N^{3N}\\
&\,+N^\eps \left|\calAbar\cap \calM \right|_N^{3N}\left(\left|{\overline{\calA+
\calC}}\cap \calN\right|_{2N}^{3N}\right)^{2/3}.
\end{align*}
From here it follows that
$$\left|{\overline{\calA+\calC}}\cap \calN\right|_{2N}^{3N}\ll N^{\eps -1/6}\left|
\calAbar \cap \calM\right|_N^{3N}+\left(N^{\eps-2/3}\left|\calAbar\cap \calM
\right|_N^{3N}\right)^3.$$
The first estimate of part (a) is thus confirmed.\par

If instead we apply (\ref{3.2}) with $s=1$ and the second estimate of Lemma 
\ref{lemma3.1}, then we obtain the bound
\begin{align*}
N^{2/3-\eps}\left| {\overline{\calA+\calC}}\cap \calN\right|_{2N}^{3N}\ll &\,N^{1/3}
\left|\calAbar\cap \calM \right|_N^{3N}\\
&\,+N^{1/6+\eps}\left|\calAbar\cap \calM \right|_N^{3N}\left(\left|{\overline{
\calA+\calC}}\cap \calN\right|_{2N}^{3N}\right)^{1/2},
\end{align*}
whence
$$\left|{\overline{\calA+\calC}}\cap \calN\right|_{2N}^{3N}\ll N^{\eps -1/3}\left|
\calAbar \cap \calM\right|_N^{3N}+\left(N^{\eps-1/2}\left|\calAbar\cap \calM
\right|_N^{3N}\right)^2.$$
This delivers the second estimate of part (a).\par

We next consider the estimates claimed in part (b). Observe first that by 
applying an elementary divisor function estimate, one confirms that the number 
of representations of a positive integer $n$, as the sum of two positive 
integral cubes, is $O(n^\eps)$. It follows that when $\calC$ is a high-density 
subset of the cubes relative to $\calL$, then $\langle 2\calC \wedge 2\calL
\rangle_0^N\gg N^{2/3-\eps}$. We therefore find from Theorem \ref{theorem2.1} that
$$\left(N^{2/3-\eps}\left|{\overline{\calA+2\calC}}\cap \calN\right|_{2N}^{3N}
\right)^2\ll \left| \calAbar \cap \calM\right|_N^{3N}\Ups({\overline{\calA+
2\calC}}\cap \calN,2(\calC\cap \calL);N).$$
Consequently, on making use of the relation (\ref{3.2}) with $s=2$ and the 
second estimate of Lemma \ref{lemma3.2}, we deduce that
\begin{align*}
N^{4/3-\eps}\left| {\overline{\calA+2\calC}}\cap \calN\right|_{2N}^{3N}\ll 
&\,N^{5/6+\eps}\left|\calAbar\cap \calM \right|_N^{3N}\\
&\,+N^{2/3+\eps}\left|\calAbar\cap \calM \right|_N^{3N}\left(\left|{\overline{
\calA+2\calC}}\cap \calN\right|_{2N}^{3N}\right)^{1/2}\\
&\,+N^{1/3+\eps}\left|\calAbar\cap \calM \right|_N^{3N}\left|{\overline{\calA+
2\calC}}\cap \calN\right|_{2N}^{3N}.
\end{align*}
We therefore deduce that when $\calA$ has $\calM$-complementary density growth 
exponent smaller than $1$, then there is a positive number $\del$ with the 
property that
\begin{align*}
\left|{\overline{\calA+2\calC}}\cap \calN\right|_{2N}^{3N}\ll &\,N^{\eps -1/2}\left|
\calAbar \cap \calM\right|_N^{3N}+N^{\eps-\del}\left|{\overline{\calA+2\calC}}\cap 
\calN\right|_{2N}^{3N}\\
&\,+\left(N^{\eps-2/3}\left|\calAbar\cap \calM \right|_N^{3N}\right)^2.
\end{align*}
The first estimate of part (b) now follows.\par

If instead we apply (\ref{3.2}) with $s=2$ and the first estimate of Lemma 
\ref{lemma3.2}, then we obtain the bound
\begin{align*}
N^{4/3-\eps}\left| {\overline{\calA+2\calC}}\cap \calN\right|_{2N}^{3N}\ll &\,N^{2/3}
\left|\calAbar\cap \calM \right|_N^{3N}\\
&\,+N^{11/18+\eps}\left|\calAbar\cap \calM \right|_N^{3N}\left|{\overline{\calA+
\calC}}\cap \calN\right|_{2N}^{3N}.
\end{align*}
Thus, when $\calA$ has $\calM$-complementary density growth exponent smaller 
than $\frac{13}{18}$, then there is a positive number $\del$ with the property 
that
$$\left|{\overline{\calA+2\calC}}\cap \calN\right|_{2N}^{3N}\ll N^{\eps -2/3}\left|
\calAbar \cap \calM\right|_N^{3N}+N^{\eps-\del}\left|{\overline{\calA+2\calC}}\cap 
\calN\right|_{2N}^{3N}.$$
The second estimate of part (b) is now immediate.\par

Finally, we turn our attention to the estimates claimed in part (c) of the 
theorem. We take $\calN_0=2\calL+\calM$, so that $\calN_0$ is a union of 
arithmetic progressions modulo $q$. Then the first estimate of part (b) implies 
that when $\calA$ has $\calM$-complementary density growth exponent smaller than
 $1$, then
$$\left|{\overline{\calA+2\calC}}\cap \calN_0\right|_{4N}^{6N}\ll N^{\eps -1/2}
\left|\calAbar \cap \calM\right|_{2N}^{6N}+N^{\eps -4/3}\left( \left|\calAbar \cap 
\calM\right|_{2N}^{6N}\right)^2.$$
In particular, there is a positive number $\del$ for which
$$\left|{\overline{\calA+2\calC}}\cap \calN_0\right|_{4N}^{6N}\ll N^{\eps-1/2}
(N^{1-\del})+N^{\eps-4/3}(N^{1-\del})^2\ll N^{2/3-\del}.$$
Thus, by summing over dyadic intervals, it follows that $\calA+2\calC$ has 
$\calN_0$-complementary density growth exponent smaller than $\frac{2}{3}$. On 
noting that $\calN\subseteq \calN_0+\calL=3\calL+\calM$, we deduce from the 
second estimate of part (a) that
\begin{align*}
\left|{\overline{\calA+3\calC}}\cap \calN\right|_{4N}^{6N}&\le \left|{\overline{(
\calA+2\calC)+\calC}}\cap (\calN_0+\calL)\right|_{4N}^{6N}\\
&\ll N^{\eps-1/3}\left|{\overline{\calA+2\calC}}\cap \calN_0\right|_{2N}^{6N}
+N^{\eps-1}\left( \left| {\overline{\calA+2\calC}}\cap \calN_0\right|_{2N}^{6N}
\right)^2\\
&\ll N^{\eps-1/3}\left| {\overline{\calA+2\calC}}\cap \calN_0\right|_{2N}^{6N},
\end{align*}
and hence
$$\left|{\overline{\calA+3\calC}}\cap \calN\right|_{4N}^{6N}\ll N^{\eps-5/6}\left|
 \calAbar \cap \calM\right|_N^{6N}+\left( N^{\eps-5/6}\left| \calAbar \cap \calM
\right|_N^{6N}\right)^2.$$
The first estimate of part (c) is now immediate.\par

Next we take $\calN_1=\calL+\calM$, so that $\calN_1$ is a union of arithmetic 
progressions modulo $q$. The first estimate of part (a) implies that when 
$\calA$ has $\calM$-complementary density growth exponent smaller than 
$\frac{8}{9}$, then
$$\left|{\overline{\calA+\calC}}\cap \calN_1\right|_{4N}^{6N}\ll N^{\eps -1/6}
\left|\calAbar \cap \calM\right|_{2N}^{6N}+N^{\eps -2}\left( \left|\calAbar \cap 
\calM\right|_{2N}^{6N}\right)^3.$$
In particular, there is a positive number $\del$ for which
$$\left|{\overline{\calA+\calC}}\cap \calN_1\right|_{4N}^{6N}\ll N^{\eps-1/6}
(N^{8/9-\del})+N^{\eps-2}(N^{8/9-\del})^3\ll N^{13/18-\del+\eps}.$$
Thus, by summing over dyadic intervals, it follows that the set $\calA+\calC$ 
has $\calN_1$-complementary density growth exponent smaller than 
$\frac{13}{18}$. On noting that $\calN\subseteq \calN_1+2\calL=3\calL+\calM$, we
 deduce from the second estimate of part (b) that
\begin{align*}
\left|{\overline{\calA+3\calC}}\cap \calN\right|_{4N}^{6N}&\le \left|{\overline{(
\calA+\calC)+2\calC}}\cap (\calN_1+2\calL)\right|_{4N}^{6N}\\
&\ll N^{\eps-2/3}\left|{\overline{\calA+\calC}}\cap \calN_1\right|_{2N}^{6N}\\
&\ll N^{\eps-5/6}\left| \calAbar\cap \calM\right|_N^{6N}+N^{\eps-8/3}\left(\left| 
\calAbar\cap \calM\right|_N^{6N}\right)^3,
\end{align*}
and hence, for some positive number $\del$, one has
$$\left|{\overline{\calA+3\calC}}\cap \calN\right|_{4N}^{6N}\ll N^{\eps-5/6}\left|
 \calAbar \cap \calM\right|_N^{6N}+N^{\eps-\del}.$$
The second estimate of part (c) now follows at once.\par
\end{proof}

On taking $q=1$ and $\calL, \calM,\calN$ each to be $\dbZ$, the various 
conclusions of Theorem \ref{theorem4.1} suffice to establish Theorem 
\ref{theorem1.3}.

\section{Consequences for sums of cubes}
The estimates contained in Theorems \ref{theorem1.4} and \ref{theorem1.5} are 
straightforward corollaries of Theorems \ref{theorem1.3} and \ref{theorem4.1}, 
as we now demonstrate. 

\begin{proof}[The proof of Theorem \ref{theorem1.4}]
The estimate for $E_4(N)$ recorded in the statement of Theorem \ref{theorem1.4} 
is, as mentioned earlier, simply a restatement of Theorem 1.3 of \cite{Woo2000}.
 Write $\nu=2982+56\sqrt{2833}$. We set $\calC=\{ n^3:n\in \dbN\}$ and 
$\calA=4\calC$, and note that this first estimate yields the bound 
$\left|\calAbar\right|_N^{3N}\le E_4(3N)\ll N^{37/42-\tau}$, for any positive number
 $\tau$ with $\tau^{-1}>\nu$. An application of the first estimate of Theorem 
\ref{theorem1.3}(a) yields the bound
\begin{align*}
\left|{\overline{\calA+\calC}}\right|_{2N}^{3N}&\ll N^{\eps-1/6}E_4(3N)+N^{\eps-2}
(E_4(3N))^3\\
&\ll N^{5/7-\tau+\eps}+N^{9/14-3\tau+\eps}.
\end{align*}
Write $\lceil \tet \rceil$ for the least integer not smaller than $\tet$, and 
define the integers $N_j$ for $j\ge 0$ by means of the iterative formula
\begin{equation}\label{4.w1}
N_0=\lceil {\textstyle{\frac{1}{2}}}N\rceil ,\quad N_{j+1}=\lceil 
{\textstyle{\frac{2}{3}}}N_j\rceil\qquad (j\ge 0).
\end{equation}
In addition, define $J$ to be the least positive integer with the property that 
$N_J=2$, and note that $J=O(\log N)$. Then, whenever $\tau_1$ is a positive 
number with $\tau_1^{-1}>\nu$, one has
$$E_5(N)\le 3+\sum_{j=1}^J\left|{\overline{\calA+\calC}}\right|_{2N_j}^{3N_j}\ll 
N^{5/7-\tau_1}.$$

\par Next we put $\calA=5\calC$, and note that the estimate just provided 
implies that $\left|\calAbar\right|_N^{3N}\le E_5(3N)\ll N^{5/7-\tau}$, for any 
positive number $\tau$ with $\tau^{-1}>\nu$. We now apply the second estimate of 
Theorem \ref{theorem1.3}(a), obtaining the bound
\begin{align*}
\left|{\overline{\calA+\calC}}\right|_{2N}^{3N}&\ll N^{\eps-1/3}E_5(3N)+N^{\eps-1}
(E_5(3N))^2\\
&\ll N^{8/21-\tau+\eps}+N^{3/7-2\tau+\eps}.
\end{align*}
Thus, whenever $\tau_1$ is a positive number with $\tau_1^{-1}>\nu$, one deduces 
that
$$E_6(N)\le 3+\sum_{j=1}^J\left|{\overline{\calA+\calC}}\right|_{2N_j}^{3N_j}\ll 
N^{3/7-2\tau_1}.$$
This completes the proof of Theorem \ref{theorem1.4}.
\end{proof}

\begin{proof}[The proof of Theorem \ref{theorem1.5}]
When $p$ is a prime number exceeding $7$, one has
$$p^3\equiv 1\mmod{2},\quad p^3\equiv \pm 1\mmod{9}\quad \text{and}\quad 
p^3\equiv \pm 1\mmod{7}.$$
If we put $\calC=\{ p^3:\text{$p$ prime and $p>7$}\}$, then it follows that for 
$s\ge 5$, one has $s\calC\subseteq \calN_s$, where we write
\begin{align*}
\calN_5&=\{ n\in \dbN : \text{$n\equiv 1\mmod{2}$, $n\not\equiv 0,\pm 
2\mmod{9}$, $n\not\equiv 0\mmod{7}$}\},\\
\calN_6&=\{ n\in \dbN : \text{$n\equiv 0\mmod{2}$, $n\not\equiv \pm 1\mmod{9}$}
\},\\
\calN_7&=\{ n\in \dbN : \text{$n\equiv 1\mmod{2}$, $n\not\equiv 0\mmod{9}$}\},\\
\calN_s&=\{ n\in \dbN : \text{$n\equiv s\mmod{2}$}\}\quad (s\ge 8).
\end{align*}
Observe that our definition of the exceptional set in the Waring-Goldbach 
problem for cubes, given in the preamble to Theorem \ref{theorem1.5}, may be 
recovered by putting $\calE_s(N)=\left| {\overline{s\calC}}\cap \calN_s
\right|_0^N$, a definition that we now extend to all natural numbers $s$.\par

Write
$$\calL=\{ l\in \dbN : \text{$l\equiv 1\mmod{2}$, $l\equiv \pm 1\mmod{9}$, $l
\equiv \pm 1\mmod{7}$}\}.$$
Then $\calL$ and $\calN_s$ $(s\ge 5)$ are unions of arithmetic progressions 
modulo $126$ satisfying the condition that $\calN_{s+1}=\calL+\calN_s$ and 
$\calN_{s+2}=2\calL+\calN_s$ $(s\ge 5)$. Moreover, it follows from the Prime 
Number Theorem in arithmetic progressions that $\langle \calC \wedge \calL
\rangle_0^N\gg N^{1/3}(\log N)^{-1}$, so that $\calC$ is a high-density subset of 
the cubes relative to $\calL$. Observe next that the proof underlying the first 
estimate of Theorem 1 of Kumchev \cite{Kum2005} shows that $\calE_5(N)\ll 
N^{79/84-\nu}$, for some positive number $\nu$. But $\calE_s(N)=\left| {\overline{
s\calC}}\cap \calN_s\right|_0^N$, and so it follows from the first bound of 
Theorem \ref{theorem4.1}(a) that
\begin{align*}
\left|{\overline{5\calC+\calC}}\cap \calN_6\right|_{2N}^{3N}&\ll N^{\eps-1/6}\left|
{\overline{5\calC}}\cap \calN_5\right|_N^{3N}+N^{\eps-2}\left(\left|{\overline{
5\calC}}\cap\calN_5\right|_N^{3N}\right)^3\\
&\ll N^{\eps-1/6}\calE_5(3N)+N^{\eps-2}\left( \calE_5(3N)\right)^3\\
&\ll N^{65/84-\nu+\eps}+N^{23/28-3\nu+\eps}.
\end{align*}
Consequently, on making use again of the notation introduced in (\ref{4.w1}), it
 follows that
$$\calE_6(N)\le 3+\sum_{j=1}^J\left|{\overline{5\calC+\calC}}\cap \calN_6
\right|_{2N_j}^{3N_j}\ll N^{23/28}.$$

\par Likewise, from the first bound of Theorem \ref{theorem4.1}(b), one finds 
that
\begin{align*}
\left|{\overline{5\calC+2\calC}}\cap \calN_7\right|_{2N}^{3N}&\ll N^{\eps-1/2}\left|
{\overline{5\calC}}\cap \calN_5\right|_N^{3N}+N^{\eps-4/3}\left(\left|{\overline{
5\calC}}\cap \calN_5\right|_N^{3N}\right)^2\\
&\ll N^{\eps-1/2}\calE_5(3N)+N^{\eps-4/3}\left( \calE_5(3N)\right)^2\\
&\ll N^{37/84-\nu+\eps}+N^{23/42-2\nu+\eps}.
\end{align*}
Consequently, one has
$$\calE_7(N)\le 3+\sum_{j=1}^J\left|{\overline{5\calC+2\calC}}\cap \calN_7
\right|_{2N_j}^{3N_j}\ll N^{23/42}.$$

\par Finally, from the first bound of Theorem \ref{theorem4.1}(c), one finds 
that
\begin{align*}
\left|{\overline{5\calC+3\calC}}\cap \calN_8\right|_{4N}^{6N}&\ll N^{\eps-5/3}
\left( \left|{\overline{5\calC}}\cap \calN_5\right|_N^{6N}\right)^2\\
&\ll N^{\eps-5/3}(\calE_5(6N))^2\ll N^{3/14-2\nu+\eps}.
\end{align*}
Then, one has
$$\calE_8(N)\le 3+\sum_{j=1}^J\left|{\overline{5\calC+3\calC}}\cap \calN_8
\right|_{2N_j}^{3N_j}\ll N^{3/14}.$$
This completes the proof of Theorem \ref{theorem1.5}.
\end{proof}

\section{Sums of biquadrates}
Our bounds for $Y_{10}(N)$ and $Y_{11}(N)$ depend on a generalisation of the 
second estimate of Lemma \ref{lemma3.1} to $k$th powers, with $k\ge 3$. This 
variant of Davenport's bound we record in Lemma \ref{lemma5.1} below. We first 
introduce some further notation. Let $k$ be a natural number with $k\ge 3$, let 
$\calK$ be a high-density subset of the $k$th powers, and write
$$g(\alp)=\sum_{x\in (\calK)_0^N}e(\alp x).$$
Also, let $\calZ$ be a subset of $\dbN$, write $Z=|\calZ|_0^N$, and define the 
exponential sum
$$K(\alp)=\sum_{n\in (\calZ)_0^N}e(n\alp).$$

\begin{lemma}\label{lemma5.1}
Let $k$ be a natural number with $k\ge 3$, and suppose that $1\le j\le k-2$. Let
 $N$ be a large natural number, and put $P=N^{1/k}$. Then one has
\begin{equation}\label{5.1}
\int_0^1|g(\alp)^{2^j}K(\alp)^2|\,d\alp \ll P^{2^j-1}Z+P^{2^j-\frac{1}{2}j-1+\eps}Z^{3/2}.
\end{equation}  
\end{lemma}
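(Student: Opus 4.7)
The plan is to apply $j$-fold Weyl differencing to $|g(\alp)|^{2^j}$, integrate against $|K|^2$, and then bound the resulting count by combining the divisor structure of the iterated differences of $y^k$ with a mean-value estimate. First I would establish the standard Weyl bound
\[
|g(\alp)|^{2^j} \ll P^{2^j - j - 1} \sum_{\bfh \in [-P,P]^j} |T_j(\alp;\bfh)|,
\]
where $T_j(\alp;\bfh) = \sum_{y} e(\alp \Del_j(y;\bfh))$ and $\Del_j$ denotes the $j$-fold iterated forward difference of $y^k$, a polynomial of degree $k - j \ge 2$ in $y$ whenever all $h_i$ are non-zero (using $j \le k - 2$). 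This bound is the standard consequence of $j$ successive applications of Cauchy--Schwarz.

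Multiplying by $|K(\alp)|^2$, integrating, and expanding via orthogonality recasts the integral as
\[
\int_0^1 |g|^{2^j}|K|^2\,d\alp \ll P^{2^j - j - 1} \sum_{\bfh,y} d(\Del_j(y;\bfh)),
\]
where $d(v) = |\{(n_1,n_2) \in \calZ^2 : n_1 - n_2 = v\}|$ satisfies $d(0) = Z$ and $\sum_v d(v)^2 \le \int_0^1 |K|^4\,d\alp \le Z^3$. The first term $P^{2^j - 1} Z$ of the claim arises from tuples $\bfh$ with some $h_i = 0$: in that case $\Del_j$ vanishes identically, contributing $\sum_y d(0) = PZ$ per tuple, with $O(P^{j-1})$ such tuples in total.

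For the second term I would restrict to $\bfh$ with all $h_i \ne 0$ and combine the polynomial divisibility $h_1 h_2 \cdots h_j \mid \Del_j(y;\bfh)$ (a consequence of the finite-difference formula) with the degree bound $\#\{y : \Del_j(y;\bfh) = v\} \le k - j$, obtaining a divisor-type estimate of the form $|\{(\bfh,y) : h_i \ne 0,\ \Del_j(y;\bfh) = v\}| \ll v^\eps$ for $v \ne 0$. A Cauchy--Schwarz step, paired with a sharp second-moment bound on the auxiliary sum $\sum_\bfh T_j(\alp;\bfh)$ obtained via one further Weyl differencing and Hua's lemma applied to $\int_0^1 |g|^{2^{j+1}}\,d\alp$ (valid since $j + 1 \le k - 1$), then yields the second term $P^{2^j - j/2 - 1 + \eps} Z^{3/2}$.

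The main obstacle I anticipate is threading the exponent of $P$ correctly through the final Cauchy--Schwarz step: a purely combinatorial treatment via the divisor bound coupled with the trivial total count of $(\bfh,y)$ pairs produces a bound that falls short of the claim by a factor $P^{1/2}$. Closing this gap requires a genuine mean-value input---for instance, a Hua-type second-moment estimate on $\sum_\bfh T_j$ that improves on the divisor-based estimate by exploiting the fine polynomial structure of $\Del_j$ in the joint variables $\bfh$ and $y$.
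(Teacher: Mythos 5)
Your setup via Weyl differencing, the passage to the counting problem, and the treatment of the degenerate tuples $\bfh$ with some $h_i=0$ all correctly reproduce the paper's first term $P^{2^j-1}Z$. But, as you yourself suspect at the end, your proposed route to the second term does not close. Splitting off $|K|^2$ by Cauchy--Schwarz and estimating $\int_0^1 |g(\alp)|^{2^{j+1}}\,d\alp$ by Hua's lemma gives at best
\[
\int_0^1|g(\alp)^{2^j}K(\alp)^2|\,d\alp\le\Bigl(\int_0^1|g(\alp)|^{2^{j+1}}\,d\alp\Bigr)^{1/2}\Bigl(\int_0^1|K(\alp)|^4\,d\alp\Bigr)^{1/2}\ll P^{2^j-(j+1)/2+\eps}Z^{3/2},
\]
which is indeed weaker than the stated $P^{2^j-j/2-1+\eps}Z^{3/2}$ by a factor $P^{1/2}$. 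The root of the loss is structural: absorbing the $\bfh$-sum back into a power of $g$ decouples the two copies of the difference polynomial, and with it discards the divisor constraints that the argument needs. The vague ``Hua-type second-moment estimate on $\sum_\bfh T_j$'' you gesture at is not supplied, so this is a genuine gap.

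The missing device is to apply Cauchy--Schwarz in a way that forces the \emph{same} $\bfh$ to appear on both sides of the resulting second moment. Write $\Del_j(z^k;\bfh)=h_1\cdots h_j\,p_j(z;\bfh)$, so $p_j$ has degree $k-j\ge 2$ in $z$, and for each $m$ and each $\bfh$ with $1\le|h_i|<P$ let $\rho(m;\bfh)$ denote the number of solutions of $h_1\cdots h_j\,p_j(z;\bfh)+n=m$ with $1\le z\le P$ and $n\in(\calZ)_0^N$. The non-degenerate part of your count is
\[
S_1=\sum_{m\in(\calZ)_0^N}\ \sum_{1\le|h_i|<P}\rho(m;\bfh),
\]
and Cauchy--Schwarz over the index set $\{(m,\bfh)\}$ yields $S_1^2\le(P^jZ)\,S_2$ with $S_2=\sum_{m,\bfh}\rho(m;\bfh)^2$. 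Here is where the structure pays off: $S_2$ counts solutions of
\[
h_1\cdots h_j\,p_j(z_1;\bfh)+n_1=h_1\cdots h_j\,p_j(z_2;\bfh)+n_2=n_3,
\]
with one and the same $\bfh$ in both equations. Solutions with $n_1=n_2$ force $p_j(z_1;\bfh)=p_j(z_2;\bfh)$; since $p_j(\cdot\,;\bfh)$ has degree $\ge 2$, the variable $z_2$ is pinned to $O(1)$ values by $(z_1,\bfh)$, and this class contributes $O(S_1)$. Solutions with $n_1\ne n_2$ force $h_1,\dots,h_j$ and $z_1-z_2$ (which divides $p_j(z_1;\bfh)-p_j(z_2;\bfh)$) to be divisors of the non-zero integer $n_1-n_2$; the divisor bound then leaves $O(P^\eps)$ choices of $(\bfh,z_1-z_2)$ for each of the $O(Z^2)$ pairs $(n_1,n_2)$, and $z_1$ is fixed up to $O(1)$ by a non-constant polynomial equation (here $j\le k-2$ is used a second time). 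Thus $S_2\ll S_1+P^\eps Z^2$, and substituting into $S_1\ll(P^jZ)^{1/2}S_2^{1/2}$ gives the self-referential inequality whose solution is $S_1\ll P^jZ+P^{j/2+\eps}Z^{3/2}$. Multiplying by the Weyl weight $P^{2^j-j-1}$ then produces exactly the claimed second term. In short, the half-power you are losing comes from decoupling the $\bfh$'s; the paper's argument keeps them locked together through the Cauchy--Schwarz and then exploits the resulting divisibility constraints.
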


\begin{proof} We begin with the trivial observation that, on considering the 
underlying diophantine equations, one has
$$\int_0^1|g(\alp)^{2^j}K(\alp)^2|\,d\alp \le \int_0^1|f(\alp)^{2^j}K(\alp)^2|\,
d\alp ,$$
where
$$f(\alp)=\sum_{1\le x\le P}e(\alp x^k).$$
Next, let $\Del_j$ denote the $j$th iterate of the forward differencing 
operator, so that whenever $\phi$ is a function of a real variable $z$, one has
$$\Del_1(\phi(z);h)=\phi(z+h)-\phi(z),$$
and when $J\ge 1$, then
$$\Del_{J+1}(\phi(z);h_1,\dots ,h_{J+1})=\Del_1(\Del_J(\phi(z);h_1,\dots 
,h_J);h_{J+1}).$$
It follows via a modest computation that when $1\le J\le k$, then
$$\Del_J(z^k;\bfh)=h_1\dots h_Jp_J(z;\bfh),$$
where $p_J$ is a homogeneous polynomial in $z$ and $\bfh$ of total degree $k-J$,
 in which the coefficient of $z^{k-J}$ is $k!/(k-J)!$. By the Weyl differencing 
lemma (see, for example, Lemma 2.3 of \cite{Vau1997}), one has
$$|f(\alp)|^{2^j}\le (2P)^{2^j-j-1}\sum_{|h_1|<P}\dots \sum_{|h_j|<P}T_j,$$
where
$$T_j=\sum_{x\in I_j}e(\alp h_1\dots h_jp_j(x;\bfh)),$$
and $I_j=I_j(\bfh)$ denotes an interval of integers, possibly empty, contained 
in $[1,P]$. On recalling the definition of $K(\alp)$, therefore, it follows 
from orthogonality that the integral on the left hand side of (\ref{5.1}) is 
bounded above by the number of integral solutions of the equation
\begin{equation}\label{5.w}
h_1\dots h_jp_j(z;\bfh)=n_1-n_2,
\end{equation}
with $|h_i|<P$ $(1\le i\le j)$, $1\le z\le P$ and $n_l\in (\calZ)_0^N$ 
$(l=1,2)$, and with each solution being counted with weight $(2P)^{2^j-j-1}$.\par

There are $O(P^j)$ possible choices for $z$ and $\bfh$ with $h_1\dots h_j=0$. 
Given any one such choice, the equation (\ref{5.w}) implies that $n_1=n_2$. 
Next, when $m$ is a natural number and $|h_i|<P$ $(1\le i\le j)$, write 
$\rho(m;\bfh)$ for the number of integral solutions of the equation
\begin{equation}\label{5.1b}
h_1\dots h_jp_j(z;\bfh)+n=m,
\end{equation}
with $1\le z\le P$ and $n\in (\calZ)_0^N$. Then on isolating the solutions with 
$h_1\dots h_j=0$ discussed earlier, we have shown at this point that
\begin{equation}\label{5.1a}
\int_0^1|g(\alp)^{2^j}K(\alp)^2|\,d\alp \ll P^{2^j-1}Z+P^{2^j-j-1}S_1,  
\end{equation}
where
$$S_1=\sum_{m,\bfh}\rho(m;\bfh),$$
in which the summation is over $m$ and $\bfh$ with $m\in (\calZ)_0^N$ and 
$1\le |h_i|<P$ $(1\le i\le j)$. Furthermore, an application of Cauchy's 
inequality reveals that
$$\Bigl( \sum_{m,\bfh}\rho(m;\bfh)\Bigr)^2\le \Bigl( \sum_{m,\bfh}1\Bigr) S_2,$$
where
$$S_2=\sum_{m,\bfh}\rho(m;\bfh)^2.$$
We therefore see that
\begin{equation}\label{5.2}
S_1\ll (P^jZ)^{1/2}S_2^{1/2}.
\end{equation}

\par Next observe that $S_2$ counts the number of integral solutions of the 
system of equations
\begin{equation}\label{5.3}
h_1\dots h_jp_j(z_1;\bfh)+n_1=h_1\dots h_jp_j(z_2;\bfh)+n_2=n_3,
\end{equation}
with $1\le z_1,z_2\le P$, $1\le |h_i|<P$ $(1\le i\le j)$ and $n_l\in (\calZ)_0^N$
 $(l=1,2,3)$. When $1\le j\le k-2$, the expression
\begin{equation}\label{5.y}
h_1\dots h_j(p_j(z_1;\bfh)-p_j(z_2;\bfh))
\end{equation}
is a non-constant polynomial in $z_1$ and $z_2$. In particular, given a solution
 $\bfz,\bfh,\bfn$ counted by $S_2$ with $n_1=n_2$, then for each fixed choice of
 $z_1$ and $\bfh$, there are $O(1)$ possible choices for $z_2$. The number of 
solutions of this type is therefore bounded above by a fixed positive multiple 
of the number of integral solutions of the equation (\ref{5.1b}) with 
$1\le z\le P$, $1\le |h_i|<P$ $(1\le i\le j)$ and $n,m\in (\calZ)_0^N$. We 
conclude, therefore, that the number of solutions $\bfz,\bfh,\bfn$ counted by 
$S_2$ with $n_1=n_2$ is $O(S_1)$.\par

Now consider a solution $\bfz,\bfh,\bfn$ counted by $S_2$ with $n_1\ne n_2$. 
Since the polynomial (\ref{5.y}) is divisible by $z_1-z_2$, one finds that 
$h_1,\dots ,h_j$ and $z_1-z_2$ are all divisors of the non-zero integer 
$n_1-n_2$. Given any one of the $O(Z^2)$ possible choices for $n_1$ and $n_2$ 
with $n_1\ne n_2$, therefore, an elementary estimate for the divisor function 
confirms that the number of choices for $\bfh$ and $z_1-z_2$ counted by $S_2$ 
is at most $O(P^\eps)$. Fixing any one such choice of $\bfh$ and $d=z_1-z_2$, 
and noting that $1\le j\le k-2$, one finds from (\ref{5.3}) that $z_1$ is 
determined from the polynomial equation
$$h_1\dots h_j(p_j(z_1;\bfh)-p_j(z_1-d;\bfh))=n_2-n_1,$$
to which there are $O(1)$ solutions. Given any one such solution, the value of 
$z_2=z_1-d$ is fixed, as is the value of $n_3$ from (\ref{5.3}). Thus we 
conclude that there are $O(P^\eps Z^2)$ solutions of this type.\par

At this point, we have shown that $S_2\ll S_1+P^\eps Z^2$, whence from 
(\ref{5.2}), we have
$$S_1\ll (P^jZ)^{1/2}(S_1+P^\eps Z^2)^{1/2}.$$
Consequently, we derive the upper bound
$$S_1\ll P^jZ+P^{\frac{1}{2}j+\eps}Z^{3/2},$$
and the conclusion of the lemma follows from (\ref{5.1a}).
\end{proof}

We note that the estimate supplied by Lemma \ref{lemma5.1} is related to that 
found in an intermediate step of the proof of Theorem 1 of \cite{Dav1942}.\par

We are now equipped to discuss additive problems involving biquadrates. Rather 
than constraining ourselves to the proof of Theorem \ref{theorem1.6}, we again 
record a more general estimate.

\begin{theorem}\label{theorem6.2w}
Let $\calL$, $\calM$ and $\calN$ be unions of arithmetic progressions modulo 
$q$, for some natural number $q$, and suppose that $\calN\subseteq 
2\calL+\calM$. Suppose also that $\calB$ is a high-density subset of the 
biquadrates relative to $\calL$, and that $\calA\subseteq \dbN$. Then, whenever 
$\eps>0$ and $N$ is a natural number sufficiently large in terms of $\eps$, one 
has
$$\left|{\overline{\calA+2\calB}}\cap \calN\right|_{2N}^{3N}\ll_q N^{\eps -1/4}
\left|\calAbar\cap \calM\right|_N^{3N}+N^{\eps -1}\left( \left|\calAbar\cap \calM
\right|_N^{3N}\right)^2.$$
\end{theorem}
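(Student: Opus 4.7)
The plan is to mimic the proof of Theorem \ref{theorem4.1}(b), with biquadrates replacing cubes and Lemma \ref{lemma5.1} replacing the mean value estimates of \S3. Set $P=N^{1/4}$, and abbreviate $Z=\left|{\overline{\calA+2\calB}}\cap \calN\right|_{2N}^{3N}$ and $A=\left|\calAbar\cap \calM\right|_N^{3N}$. Since $\calB$ is a high-density subset of the biquadrates relative to $\calL$ and any natural number admits $O(N^\eps)$ representations as a sum of two biquadrates, one has $\langle 2\calB\wedge 2\calL\rangle_0^N\gg N^{1/2-\eps}$. Invoking Theorem \ref{theorem2.1} (which applies thanks to $\calN\subseteq 2\calL+\calM$), it follows that
$$
\left(N^{1/2-\eps}Z\right)^2\ll_q A\cdot \Ups\bigl({\overline{\calA+2\calB}}\cap\calN,\,2(\calB\cap\calL);N\bigr).
$$

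Next I would bound the $\Ups$-quantity by a Fourier integral. Writing $g(\alp)=\sum_{x\in(\calB\cap\calL)_0^P}e(\alp x^4)$ and $K(\alp)=\sum_{n\in({\overline{\calA+2\calB}}\cap\calN)_{2N}^{3N}}e(n\alp)$, orthogonality gives
$$
\Ups\bigl({\overline{\calA+2\calB}}\cap\calN,\,2(\calB\cap\calL);N\bigr)\le \int_0^1 |g(\alp)^4K(\alp)^2|\,d\alp.
$$
Now apply Lemma \ref{lemma5.1} with $k=4$ and $j=2$ (so $1\le j\le k-2$ and $2^j=4$). The lemma then yields
$$
\int_0^1|g(\alp)^4K(\alp)^2|\,d\alp \ll P^{3}Z+P^{2+\eps}Z^{3/2}\ll N^{3/4}Z+N^{1/2+\eps}Z^{3/2}.
$$

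Substituting this into the squared inequality and dividing by $Z$, one gets
$$
N^{1-\eps}Z\ll_q A\bigl(N^{3/4}+N^{1/2+\eps}Z^{1/2}\bigr),
$$
i.e.\
$$
Z\ll_q N^{\eps-1/4}A+N^{\eps-1/2}A\,Z^{1/2}.
$$
Finally I would separate cases: either the first term on the right dominates, which directly gives the first bound $Z\ll_q N^{\eps-1/4}A$, or the second term does, in which case squaring yields $Z\ll_q N^{2\eps-1}A^2$, matching the second bound in the theorem. Both bounds are absorbed into a single statement at the cost of adjusting $\eps$.

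The only step that requires any care is verifying that Lemma \ref{lemma5.1} applies cleanly: the lemma is stated for the full set of $k$th powers rather than a high-density subset, but the bound for $g$ is dominated by the corresponding bound for the full biquadrate exponential sum (via a trivial positivity/diophantine comparison as used at the very start of the proof of Lemma \ref{lemma5.1}), so this causes no difficulty. Beyond that, the argument is purely algebraic bookkeeping in the style of the proofs of parts (a) and (b) of Theorem \ref{theorem4.1}, with no further obstacle.
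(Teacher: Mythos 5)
Your proof is correct and follows essentially the same route as the paper's: apply Theorem \ref{theorem2.1} with $2(\calB\cap\calL)$ in the role of the auxiliary set, use the divisor-function bound for sums of two biquadrates to get $\langle 2\calB\wedge 2\calL\rangle_0^N\gg N^{1/2-\eps}$, bound $\Ups$ by $\int_0^1|g(\alp)^4K(\alp)^2|\,d\alp$, and invoke Lemma \ref{lemma5.1} with $k=4$, $j=2$. The only blemish is notational: in your definition of $g$ the sum should run over the fourth roots of the elements of $(\calB\cap\calL)_0^N$ (the set $\calT$ with $\calB\cap\calL=\{n^4:n\in\calT\}$), not over $(\calB\cap\calL)_0^P$ with $e(\alp x^4)$, but this is cosmetic and does not affect the argument.
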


\begin{proof} Let $N$ be a large natural number, and suppose that 
$\calL,\calM,\calN$ satisfy the hypotheses of the statement of the theorem. 
Also, let $\calB$ be a high density subset of the biquadrates relative to 
$\calL$. Then, in particular, there is a subset $\calT$ of $\dbN$ for which 
$\calB\cap \calL=\{n^4:n\in \calT\}$. Consider also a subset $\calA$ of $\dbN$. 
Write $P=[N^{1/4}]$. The quantity $\Ups({\overline{\calA+2\calB}}\cap \calN,
2(\calB\cap \calL);N)$ is bounded above by the number of solutions of the 
equation
$$n_1-n_2=x_1^4+x_2^4-x_3^4-x_4^4,$$
with $n_1,n_2\in \left({\overline{\calA+2\calB}}\cap \calN\right)_{2N}^{3N}$ and 
$x_i\in (\calT)_0^{P}$ $(1\le i\le 4)$. Putting $\calK=\calB$ and $\calZ=
{\overline{\calA+2\calB}}\cap \calN$, then in the notation associated with the 
statement of Lemma \ref{lemma5.1}, we obtain the bound
$$\Ups({\overline{\calA+2\calB}}\cap \calN,2(\calB \cap \calL);N)\le \int_0^1
|g(\alp)^4K(\alp)^2|\,d\alp.$$
The estimate supplied by Lemma \ref{lemma5.1} therefore yields the relation
\begin{align*}
\Ups({\overline{\calA+2\calB}}\cap \calN,2(\calB \cap \calL);N)\ll &\,N^{3/4}
\left|{\overline{\calA+2\calB}}\cap\calN\right|_{2N}^{3N}\\
&\, +N^{1/2+\eps}\left(\left|{\overline{\calA+2\calB}}\cap\calN\right|_{2N}^{3N}
\right)^{3/2}.
\end{align*}
We substitute this estimate into the conclusion of Theorem \ref{theorem2.1}, and
 thereby deduce that
\begin{align*}
\left(\langle 2\calB\wedge 2\calL\rangle_0^N\right)^2\left|{\overline{
\calA+2\calB}}\cap \calN\right|_{2N}^{3N}&\\
\ll N^{3/4}\left|\calAbar\cap \calM\right|_N^{3N}&+N^{1/2+\eps}\left|\calAbar\cap 
\calM\right|_N^{3N}\left(\left|{\overline{\calA+2\calB}}\cap \calN\right|_{2N}^{3N}
\right)^{1/2}.
\end{align*}

\par The number of representations of a positive integer $n$ as the sum of two 
integral squares is at most $O(n^\eps)$ (this result is classical). It follows 
that the number of representations of a positive integer $n$ as the sum of two 
biquadrates is also $O(n^\eps)$. Since $\calB$ is a high-density subset of the 
biquadrates relative to $\calL$, we deduce that $\langle 2\calB \wedge 2\calL
\rangle_0^N\gg N^{1/2-\eta}$ for every positive number $\eta$. Thus we arrive at 
the upper bound
\begin{align*}
\left|{\overline{\calA+2\calB}}\cap \calN\right|_{2N}^{3N}\ll &\, N^{\eps-1/4}\left|
\calAbar\cap \calM\right|_N^{3N}\\
&\, +N^{\eps-1/2}\left| \calAbar \cap\calM\right|_N^{3N}
\left(\left|{\overline{\calA+2\calB}}\cap \calN \right|_{2N}^{3N}\right)^{1/2},
\end{align*}
whence
$$\left|{\overline{\calA+2\calB}}\cap \calN\right|_{2N}^{3N}\ll N^{\eps-1/4}\left|
\calAbar\cap \calM\right|_N^{3N}+\left( N^{\eps-1/2}\left| \calAbar \cap\calM
\right|_N^{3N}\right)^2,$$
and the conclusion of the theorem follows.
\end{proof}

\begin{proof}[The proof of Theorem \ref{theorem1.6}]
When $x\in \dbN$, one has $x^4\equiv 0$ or $1\pmod{16}$. Put $\calB=\{x^4:x\in 
\dbN\}$ and $\calL=\{n\in \dbN:\text{$n\equiv 0$ or $1$ modulo $16$}\}$. Also, 
when $s$ is a natural number, write
$$\calN_s=\{ n\in \dbN: \text{$n\equiv r\mmod{16}$ with $1\le r\le s$}\}.$$
Then $\calL$ and $\calN_s$ $(s\ge 1)$ are unions of arithmetic progressions 
modulo $16$ satisfying the condition that $\calN_{s+2}=\calN_s+2\calL$.

\par Observe next that, when $s\ge 7$ and $1\le r\le s$, a classical 
application of Bessel's inequality leads from the argument underlying the proof 
of Theorem 1.2 of \cite{Vau1989}, via Theorem 2 of \cite{BW2000}, to the 
estimate
\begin{equation}\label{6.w3}
\left| {\overline{s\calB}}\cap \calN_s\right|_N^{3N}\le Y_s(3N)\ll 
N^{1-(s-6)/16-\del_1},
\end{equation}
in which $\del_1>0.00914$. Here, we have implicitly applied Weyl's inequality 
for superfluous variables in the familiar manner. Although we will not go into 
details within this paper, the argument required in order to treat the major 
arcs, in the implicit application of the circle method, follows along the lines 
of that described in \S3 of \cite{slim4}. The key ingredient is Lemma 5.4 of 
\cite{VW2000}, which allows for the successful analysis of a sixth moment 
involving four smooth and two classical biquadratic Weyl sums. This completes 
our sketch of the proof of the estimates recorded in Theorem \ref{theorem1.6} 
for $Y_s(N)$ when $7\le s\le 9$.\par

We now provide an estimate for $Y_s(N)$ when $s=10$ and $11$. The set $\calB$ is
 trivially a high-density subset of the biquadrates relative to $\calL$, and so 
it follows from Theorem \ref{theorem6.2w} that
\begin{align*}
\left|{\overline{8\calB+2\calB}}\cap \calN_{10}\right|_{2N}^{3N}&\ll N^{\eps-1/4}
\left|{\overline{8\calB}}\cap \calN_8\right|_N^{3N}+N^{\eps-1}\left(\left|
{\overline{8\calB}}\cap\calN_8\right|_N^{3N}\right)^2\\
&\ll N^{\eps-1/4}Y_8(3N)+N^{\eps-1}\left( Y_8(3N)\right)^2.
\end{align*}
Then from (\ref{6.w3}) we see that
$$\left|{\overline{10\calB}}\cap \calN_{10}\right|_{2N}^{3N}\ll N^{5/8-\del_1+\eps}+
N^{3/4-2\del_1+\eps}.$$
Thus, again making use of the notation from (\ref{4.w1}), and with 
$\del=0.00914$, one deduces that
$$Y_{10}(N)\le 3+\sum_{j=1}^J\left|{\overline{10\calB}}\cap \calN_{10}
\right|_{2N_j}^{3N_j}\ll N^{3/4-2\del}.$$
Meanwhile, when $s=11$, in like manner Theorem \ref{theorem6.2w} delivers the 
bound
$$\left|{\overline{9\calB+2\calB}}\cap \calN_{11}\right|_{2N}^{3N}\ll N^{\eps-1/4}
Y_9(3N)+N^{\eps-1}(Y_9(3N))^2.$$
Then from (\ref{6.w3}) we see that
$$\left|{\overline{11\calB}}\cap \calN_{11}\right|_{2N}^{3N}\ll N^{9/16-\del_1+\eps}+
N^{5/8-2\del_1+\eps}.$$
Thus, again making use of the notation from (\ref{4.w1}), and with 
$\del=0.00914$, one deduces that
$$Y_{11}(N)\le 3+\sum_{j=1}^J\left|{\overline{11\calB}}\cap \calN_{11}
\right|_{2N_j}^{3N_j}\ll N^{5/8-2\del}.$$
This completes the proof of Theorem \ref{theorem1.6}. 
\end{proof}

\section{Further remarks on abstract exceptional sets} Since the formulation of 
exceptional sets underlying our statement of Theorem \ref{theorem1.1} would 
appear to be novel to the literature, it seems worthwhile to explore some 
alternative approaches and associated consequences.\par

We begin by providing a formulation of Theorem \ref{theorem1.1} which, though 
equivalent, is sometimes more transparent in its application. In this context, 
when $\calC$ and $\calD$ are subsets of $\dbN$, it is convenient to define 
$\Upshat(\calC,\calD;N)$ to be the number of solutions of the equation 
(\ref{1.1}) with $c_1,c_2\in (\calC)_{2N}^{3N}$, $d_1,d_2\in (\calD)_0^N$ and 
$c_1\ne c_2$. We then have
$$\Ups(\calC,\calD;N)=\left| \calC\right|_{2N}^{3N}\left| \calD\right|_0^N+
\Upshat(\calC,\calD;N).$$
Let $N$ be a large natural number, and suppose that $\calA,\calB\subseteq \dbN$.
 Then, when
$$\Upshat({\overline{\calA+\calB}},\calB;N)\le  |\calB|_0^N\left| 
{\overline{\calA+\calB}}\right|_{2N}^{3N},$$
it follows from Theorem \ref{theorem1.1} that
$$\left( |\calB|_0^N\left| {\overline{\calA+\calB}}\right|_{2N}^{3N}\right)^2\le 
2\left| \calAbar\right|_N^{3N}|\calB|_0^N\left| {\overline{\calA+\calB}}
\right|_{2N}^{3N}.$$
Meanwhile, when
$$\Upshat({\overline{\calA+\calB}},\calB;N)>|\calB|_0^N\left| {\overline{
\calA+\calB}}\right|_{2N}^{3N},$$
then instead Theorem \ref{theorem1.1} yields
$$\left( |\calB|_0^N\left| {\overline{\calA+\calB}}\right|_{2N}^{3N}\right)^2<
2\left| \calAbar\right|_N^{3N}\Upshat({\overline{\calA+\calB}},\calB;N).$$
We summarise this interpretation in the following theorem.

\begin{theorem}\label{theorem6.1}
Suppose that $\calA,\calB\subseteq \dbN$. Then for each natural number $N$, one 
has either
$$|\calB|_0^N\left| {\overline{\calA+\calB}}\right|_{2N}^{3N}\le 2\left| \calAbar
\right|_N^{3N},$$
or else
$$\left( |\calB|_0^N\left| {\overline{\calA+\calB}}\right|_{2N}^{3N}\right)^2\le 
2\left| \calAbar\right|_N^{3N}\Upshat({\overline{\calA+\calB}},\calB;N).$$
\end{theorem}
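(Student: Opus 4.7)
The plan is to derive Theorem \ref{theorem6.1} as a direct case analysis applied to the conclusion of Theorem \ref{theorem1.1}, using the diagonal/off-diagonal decomposition of $\Ups$ recorded immediately before the statement.

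First I would record the identity
$$\Ups({\overline{\calA+\calB}},\calB;N)=\left|{\overline{\calA+\calB}}\right|_{2N}^{3N}|\calB|_0^N+\Upshat({\overline{\calA+\calB}},\calB;N),$$
which holds because the solutions of $c_1-d_1=c_2-d_2$ with $c_1=c_2$ force $d_1=d_2$, contributing precisely $|{\overline{\calA+\calB}}|_{2N}^{3N}|\calB|_0^N$, while the remaining solutions, for which $c_1\ne c_2$, are counted by $\Upshat$. May assume $|{\overline{\calA+\calB}}|_{2N}^{3N}$ and $|\calB|_0^N$ are both nonzero, since otherwise the first alternative holds trivially.

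Next I would split into two cases according to which of the two terms in this decomposition dominates. In the first case, assume
$$\Upshat({\overline{\calA+\calB}},\calB;N)\le |\calB|_0^N\left|{\overline{\calA+\calB}}\right|_{2N}^{3N}.$$
Then the displayed identity gives $\Ups({\overline{\calA+\calB}},\calB;N)\le 2|\calB|_0^N|{\overline{\calA+\calB}}|_{2N}^{3N}$, and substituting this into the bound supplied by Theorem \ref{theorem1.1} yields
$$\left(|\calB|_0^N\left|{\overline{\calA+\calB}}\right|_{2N}^{3N}\right)^2\le 2\left|\calAbar\right|_N^{3N}|\calB|_0^N\left|{\overline{\calA+\calB}}\right|_{2N}^{3N}.$$
Dividing through by the factor $|\calB|_0^N|{\overline{\calA+\calB}}|_{2N}^{3N}$ produces the first alternative of the theorem.

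In the complementary case, assume instead that
$$\Upshat({\overline{\calA+\calB}},\calB;N)>|\calB|_0^N\left|{\overline{\calA+\calB}}\right|_{2N}^{3N}.$$
Then the decomposition gives $\Ups({\overline{\calA+\calB}},\calB;N)<2\Upshat({\overline{\calA+\calB}},\calB;N)$, and feeding this into Theorem \ref{theorem1.1} delivers at once
$$\left(|\calB|_0^N\left|{\overline{\calA+\calB}}\right|_{2N}^{3N}\right)^2\le 2\left|\calAbar\right|_N^{3N}\Upshat({\overline{\calA+\calB}},\calB;N),$$
which is the second alternative. Since no computation beyond these substitutions is required, there is no genuine obstacle — the only subtlety is to recognise that one of the two terms in the decomposition of $\Ups$ must dominate up to a factor of two, so the dichotomy exhausts all possibilities.
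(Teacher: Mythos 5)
Your proof is correct and matches the paper's own argument essentially verbatim: the same decomposition $\Ups=\left|\calC\right|_{2N}^{3N}\left|\calD\right|_0^N+\Upshat$ (given in the preamble to the theorem), the same dichotomy on which of the two terms dominates, and in each case the same substitution into Theorem \ref{theorem1.1} followed by a division in the first case. Your explicit remark that $|\calB|_0^N\left|{\overline{\calA+\calB}}\right|_{2N}^{3N}$ may be assumed nonzero before dividing is a small but welcome bit of care that the paper passes over silently.
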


An immediate application of Theorem \ref{theorem6.1} relates to exceptional set 
problems involving prime numbers. Let $\calA$ be a subset of $\dbN$, and suppose
 that $\calP$ is a non-empty subset of the prime numbers. For a fixed non-zero 
value of the integer $n_1-n_2$, standard sieve methods show that there is a 
positive number $C$ with the property that the number of solutions of the 
equation 
$p_1-p_2=n_1-n_2$, in prime numbers $p_1,p_2\in (\calP)_0^N$, is at most 
$$C\frac{N}{(\log N)^2}\prod_{\substack{{\text{$p$ prime}}\\ p|(n_1-n_2)}}
\Bigl( \frac{p-1}{p-2}\Bigr)\ll \frac{N\log \log N}{(\log N)^2}.$$
Thus we find that
$$\Upshat({\overline{\calA+\calP}},\calP;N)\ll \left( \left| {\overline{
\calA+\calP}}\right|_{2N}^{3N}\right)^2N\frac{\log \log N}{(\log N)^2}.
$$
The second alternative of Theorem \ref{theorem6.1} therefore implies that
$$\left| {\overline{\calA+\calP}}\right|_{2N}^{3N}\ll \left( \frac{\left| 
\calAbar\right|_N^{3N}}{N(\log \log N)^{-1}}\right)^{1/2}\left( \frac{N(\log N)^{-1}
}{\left|\calP\right|_0^N}\right) \left| {\overline{\calA+\calP}}\right|_{2N}^{3N}.
$$
In situations wherein $\left|\calAbar\right|_N^{3N}=o(N/\log \log N)$ and 
$\calP$ has positive Dirichlet density, this yields the estimate
$$\left| {\overline{\calA+\calP}}\right|_{2N}^{3N}=o\left( \left| 
{\overline{\calA+\calP}}\right|_{2N}^{3N}\right).$$
Thus one finds that $\left| {\overline{\calA+\calP}}\right|_{2N}^{3N}=0$ for large
 enough values of $N$, a conclusion that plainly holds in much wider generality 
than this illustrative example suggests. Meanwhile, the first alternative of 
Theorem \ref{theorem6.1} yields the bound
$$\left| {\overline{\calA+\calP}}\right|_{2N}^{3N}\le 2\left| \calAbar
\right|_N^{3N}/\left| \calP\right|_0^N\ll \frac{\log N}{N}\left|\calAbar
\right|_N^{3N}.$$
In particular, in the scenario under consideration, one finds that 
$$\left| {\overline{\calA+\calP}}\right|_{2N}^{3N}=o\left(\frac{\log N}
{\log \log N}\right),$$
an exceptionally slim exceptional set estimate.\par

Consider next problems wherein the set $\calB$ is a non-empty set of natural 
numbers supported on the values of a polynomial sequence of degree exceeding 
one. A divisor function estimate shows that
$$\Upshat({\overline{\calA+\calB}},\calB;N)\ll N^\eps \left( \left| 
{\overline{\calA+\calB}}\right|_{2N}^{3N}\right)^2.$$
In such a situation, the first alternative of Theorem \ref{theorem6.1} supplies 
the bound
\begin{equation}\label{7.new0}
\left| {\overline{\calA+\calB}}\right|_{2N}^{3N}\le 2\left| \calAbar
\right|_N^{3N}/|\calB|_0^N.
\end{equation}
In the alternative case, one finds that
$$\left| {\overline{\calA+\calB}}\right|_{2N}^{3N}\ll N^\eps \left( \left| \calAbar
\right|_N^{3N}\left( |\calB|_0^N\right)^{-2}\right)^{1/2}\left| {\overline{\calA
+\calB}}\right|_{2N}^{3N}.$$
Thus, provided that
\begin{equation}\label{7.new1}
|\calB|_0^N>N^\del \left( \left| \calAbar\right|_N^{3N}\right)^{1/2},
\end{equation}
for some fixed positive number $\del$, and all large values of $N$, then we 
deduce that $\left| {\overline{\calA+\calB}}\right|_{2N}^{3N}\ll N^{-\del/2}\left| 
{\overline{\calA+\calB}}\right|_{2N}^{3N}$. The latter implies that $\left| 
{\overline{\calA+\calB}}\right|_{2N}^{3N}=0$ for large enough values of $N$. In 
either case, therefore, provided that the condition (\ref{7.new1}) holds, then 
for large values of $N$ one has the upper bound (\ref{7.new0}).\par

It is natural to speculate concerning the true magnitude of the improvement in 
the exceptional set estimates available from the addition of a set $\calB$. An 
example at one end of the spectrum is given by taking a set $\calA\subseteq 
\dbN$ with the property that $\calAbar$ is supported on even numbers only, and 
$\calB=\{ 0,1\}$. Then $\calA+\calB=\dbN$, so that no matter what the 
cardinality of $\left(\calAbar\right)_N^{3N}$ may be, one has $\left| 
{\overline{\calA+\calB}}\right|_{2N}^{3N}=0$ with $|\calB|_0^N=2$. In addition, if
 $\calA\subseteq \dbN$ has the property that $|\calA|_N^{3N}=o(N)$ as $N$ tends 
to infinity, then $\left| \calAbar \right|_N^{3N}\gg N$, and a priori there is no
 reason to suppose that $\left| {\overline{\calA+\calB}}\right|_{2N}^{3N}$ is 
$o(N)$.\par

The typical situation is probably reflected by a heuristic argument based on 
the application of the Hardy-Littlewood method. On estimating the contribution 
anticipated from the major arcs, one is led to the following speculation.

\begin{conjecture}\label{conjecture6.2}
Suppose that $\calA,\calB\subseteq \dbN$. Then one has
\begin{equation}\label{6.4}
\Upshat({\overline{\calA+\calB}},\calB;N)\ll N^{\eps-1}\left( \left| {\overline{
\calA+\calB}}\right|_{2N}^{3N}|\calB|_0^N\right)^2.
\end{equation}
\end{conjecture}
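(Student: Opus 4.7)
The plan is to attack (\ref{6.4}) by the Hardy-Littlewood method. Setting $\calC = \overline{\calA+\calB}$, introduce the generating functions
$$K(\alp) = \sum_{n \in (\calC)_{2N}^{3N}} e(n\alp), \qquad F(\alp) = \sum_{b \in (\calB)_0^N} e(b\alp),$$
so that by orthogonality
$$\Ups(\calC,\calB;N) = \int_0^1 |K(\alp)|^2 |F(\alp)|^2 \, d\alp, \quad \Upshat(\calC,\calB;N) = \Ups(\calC,\calB;N) - |\calC|_{2N}^{3N}|\calB|_0^N.$$
Partition $[0,1)$ into major arcs $\grM$ about rationals $a/q$ with $q$ small, and minor arcs $\grm = [0,1) \setminus \grM$, along the familiar lines.

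On $\grM$ one would expand $K$ and $F$ via their rational approximations, writing each in terms of the local densities of $\calC$ and $\calB$ in short intervals and residue classes modulo $q$. A standard heuristic computation should then show that the major-arc contribution to $\Ups$ splits as a diagonal piece of size $|\calC|_{2N}^{3N}|\calB|_0^N$, which is precisely cancelled on passing to $\Upshat$, together with a secondary off-diagonal piece of the expected size $N^{\eps-1}(|\calC|_{2N}^{3N}|\calB|_0^N)^2$ arising from the interaction of the singular series and the singular integral. This is precisely the target bound, and it is exactly the heuristic alluded to by the authors in their preamble to the conjecture.

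The hard part is the minor-arc integral
$$\int_\grm |K(\alp)|^2 |F(\alp)|^2 \, d\alp,$$
and this is why the statement is offered only as a conjecture. For wholly arbitrary $\calA, \calB$ there is no reason that either $K$ or $F$ should display minor-arc cancellation; one can engineer $\calB$ so that $F$ is near-maximal on much of $\grm$, and one has no handle at all on the additive structure of the exceptional set $\calC$. Under structural assumptions on $\calB$ the natural route is to combine a Weyl-type pointwise bound $\sup_{\alp \in \grm}|F(\alp)| \ll |\calB|_0^N N^{-\sig}$ with a mean value bound for a higher moment of $F$, and pair these with Parseval's identity $\int_0^1 |K(\alp)|^2 \, d\alp = |\calC|_{2N}^{3N}$ through an interpolation scheme of the kind employed in Lemmas \ref{lemma3.1}--\ref{lemma3.3}. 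For $\calB$ consisting of $k$-th powers, primes, or smooth numbers the required inputs are classical and the conjecture should follow in such structured settings; removing all hypotheses on $\calB$ appears to require a genuinely new idea transcending the circle method, and it is here that the real obstacle lies.
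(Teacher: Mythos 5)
This is a conjecture, not a theorem: the paper offers no proof, only the one-line heuristic that a major-arc analysis via the circle method predicts a secondary term of the stated size, and you have correctly identified and reproduced exactly that heuristic, including the observation that the minor arcs are the genuine obstruction for arbitrary $\calA,\calB$. Your discussion matches the paper's own motivation for (\ref{6.4}) and rightly stops short of claiming a proof.
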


If we substitute the conjectured estimate (\ref{6.4}) into the second 
alternative of Theorem \ref{theorem6.1}, then one obtains the estimate
$$\left| {\overline{\calA+\calB}}\right|_{2N}^{3N}\ll N^{\eps-1}\left| {\overline{
\calA+\calB}}\right|_{2N}^{3N}\left| \calAbar\right|_N^{3N}.$$
When $\calA$ has complementary density growth exponent smaller than $1$, 
therefore, it follows that for some positive number $\del$, one has
$$\left| {\overline{\calA+\calB}}\right|_{2N}^{3N}\ll N^{\eps-\del}\left| 
{\overline{\calA+\calB}}\right|_{2N}^{3N},$$
which for sufficiently large values of $N$ implies that $\left| {\overline{
\calA+\calB}}\right|_{2N}^{3N}=0$. The first alternative of Theorem 
\ref{theorem6.1}, meanwhile, yields the bound
$$\left| {\overline{\calA+\calB}}\right|_{2N}^{3N}\le 2\left| \calAbar 
\right|_N^{3N}/|\calB|_0^N.$$
We may therefore conclude as follows.

\begin{corollary}\label{corollary6.3}
Suppose that $\calA$ and $\calB$ are non-empty subsets of $\dbN$, and that 
$\calA$ has complementary density growth exponent smaller than $1$. Assume the 
validity of Conjecture \ref{conjecture6.2}. Then for all large values of $N$, 
one has
$$\left| {\overline{\calA+\calB}}\right|_{2N}^{3N}\le 2\left| \calAbar 
\right|_{N}^{3N}/|\calB|_0^N.$$
\end{corollary}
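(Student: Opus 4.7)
The plan is to exploit the dichotomy furnished by Theorem \ref{theorem6.1} together with the conjectured bound on $\Upshat$ provided by Conjecture \ref{conjecture6.2}. I would fix a large natural number $N$ and consider the two alternatives of Theorem \ref{theorem6.1} in turn.

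In the first alternative, the inequality
$$|\calB|_0^N\left| {\overline{\calA+\calB}}\right|_{2N}^{3N}\le 2\left| \calAbar\right|_N^{3N}$$
holds directly, and after dividing through by $|\calB|_0^N$ one reads off the desired estimate. Non-emptiness of $\calB$ guarantees that $|\calB|_0^N\ge 1$ once $N$ is taken large enough to include some element of $\calB$, so the division is legitimate.

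The substantive step is handling the second alternative. Here I would substitute the conjectured bound (\ref{6.4}) into the inequality
$$\left( |\calB|_0^N\left| {\overline{\calA+\calB}}\right|_{2N}^{3N}\right)^2\le 2\left| \calAbar\right|_N^{3N}\Upshat({\overline{\calA+\calB}},\calB;N).$$
Cancelling the common factor $(|\calB|_0^N)^2\left| {\overline{\calA+\calB}}\right|_{2N}^{3N}$ would yield
$$\left| {\overline{\calA+\calB}}\right|_{2N}^{3N}\ll N^{\eps-1}\left| \calAbar\right|_N^{3N}\left| {\overline{\calA+\calB}}\right|_{2N}^{3N}.$$
Invoking the hypothesis that $\calA$ has complementary density growth exponent smaller than $1$, one obtains a positive $\del$ for which $\left| \calAbar\right|_N^{3N}\ll N^{1-\del}$. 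Choosing $\eps<\del$ and taking $N$ sufficiently large, the last displayed inequality reads
$$\left| {\overline{\calA+\calB}}\right|_{2N}^{3N}\ll N^{\eps-\del}\left| {\overline{\calA+\calB}}\right|_{2N}^{3N},$$
which is sustainable only when $\left| {\overline{\calA+\calB}}\right|_{2N}^{3N}=0$, in which case the conclusion of the corollary holds trivially.

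Combining the two cases yields the stated bound for all sufficiently large $N$. I do not anticipate any serious obstacle: the real analytical content is encoded in Theorem \ref{theorem6.1} and in the hypothesised Conjecture \ref{conjecture6.2}, while the remainder is essentially a two-case analysis. The one point requiring mild care is the bookkeeping of $\eps$ against $\del$, and the verification that the implied constants in the second-alternative argument can be absorbed by taking $N$ large relative to $\del$.
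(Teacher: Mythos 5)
Your proposal is correct and follows essentially the same route as the paper: invoke the dichotomy of Theorem \ref{theorem6.1}, substitute Conjecture \ref{conjecture6.2} into the second alternative to force $\left|{\overline{\calA+\calB}}\right|_{2N}^{3N}=0$ for large $N$ (via $\left|\calAbar\right|_N^{3N}\ll N^{1-\del}$), and read off the bound from the first alternative. The bookkeeping of cancelling $(|\calB|_0^N)^2\left|{\overline{\calA+\calB}}\right|_{2N}^{3N}$ and the $\eps$ versus $\del$ comparison are exactly the steps the paper performs implicitly.
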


The validity of the conditional estimate of Corollary \ref{corollary6.3} would 
have far reaching consequences. Let $\calX=\{ n^k:n\in \dbN\}$. As usual, we 
define $G(k)$ to be the least natural number $s$ with the property that all 
sufficiently large integers are the sum of at most $s$ $k$th powers of natural 
numbers. Equivalently, the number $G(k)$ is the least natural number $s$ for 
which $|{\overline{s\calX}}|_0^N=O(1)$ for all large $N$. Also, let $G_1(k)$ 
denote the least natural number $s_1$ for which $s_1\calX$ has complementary 
density growth exponent smaller than $1$. Thus, when $s\ge G_1(k)$, almost all 
natural numbers are the sum of at most $s$ $k$th powers of natural numbers.\par

By repeated application of the conditional Corollary \ref{corollary6.3}, one 
deduces that for $G_1(k)<t<G_1(k)+k$, one has
$$\left| {\overline{t\calX}}\right|_{2N}^{3N}\ll \left| {\overline{(t-1)\calX}}
\right|_N^{3N}N^{-1/k},$$
whence $t\calX$ has complementary density growth exponent smaller than
$$1-(t-G_1(k))/k.$$
In this way, one finds that
$$G(k)\le G_1(k)+k.$$
The methods of Wooley \cite{Woo1992}, \cite{Woo1995}, in combination with a 
classical application of Bessel's inequality, yield the estimate
$$G_1(k)\le {\textstyle{\frac{1}{2}}}k(\log k+\log \log k+2+o(1)),$$
from which we deduce the conditional upper bound
$$G(k)\le {\textstyle{\frac{1}{2}}}k(\log k+\log \log k+4+o(1)).$$
Of course, it seems likely that theoretical advances sufficient to establish 
Conjecture \ref{conjecture6.2} would already yield an estimate of the shape 
$G(k)=O(k)$.

\bibliographystyle{amsbracket}
\providecommand{\bysame}{\leavevmode\hbox to3em{\hrulefill}\thinspace}

\end{document}